\theoremstyle{definition}
\newtheorem{defi}{Definition}
\newtheorem{theo}[defi]{Theorem}
\newtheorem{lemm}[defi]{Lemma}
\newtheorem{prop}[defi]{Proposition}
\newtheorem{rem}[defi]{Remark}
\def\det{{\rm det}}
\def\End{{\rm End}}
\def\det{{\rm det}}
\def\Cap{{\rm Cap}}
\def\supp{{\rm supp}}
\def\R{{\mathbb R}}
\def\Z{{\mathbb Z}}
\def\C{{\mathbb C}}
\def\N{{\mathbb N}}
\def\CC{{\cal C}}
\def\inum{{\sqrt{-1}}}
\begin{document}

\title {Cyclic Higgs bundles, subharmonic functions, and the Dirichlet problem}
\author {Natsuo Miyatake}
\date{}
\maketitle
\begin{abstract} We demonstrate the existence and uniqueness of the solution to the Dirichlet problem for a generalization of Hitchin's equation for diagonal harmonic metrics on cyclic Higgs bundles. The generalized equations are formulated using subharmonic functions. In this generalization, the coefficient exhibits worse regularity than that in the original equation. 
\end{abstract}

\section{Introduction}\label{section 1}
Let $X$ be a connected, possibly non-compact Riemann surface equipped with a K\"ahler metric $g_X$. We denote by $h_X$ the Hermitian metric on the canonical bundle $K_X\rightarrow X$, by $\omega_X$ the K\"ahler form, and by $\Lambda_{\omega_X}$ the adjoint of $\omega_X\wedge$. We choose a square root $K_X^{1/2}$ of the canonical bundle $K_X$. We define a vector bundle $E$ of rank $r$ as $E\coloneqq K_X^{(r-1)/2}\oplus K_X^{(r-3)/2}\oplus\cdots \oplus K_X^{-(r-3)/2}\oplus K_X^{-(r-1)/2}$. Let $h=(h_1,\dots, h_r)$ be a smooth diagonal Hermitian metric on $E$ with curvature $F_h=(F_{h_1},\dots, F_{h_r})$. We assume that $\det(h)$ is flat. Let $H_j\coloneqq h_j^{-1}\otimes h_{j+1}\otimes h_X$ be a Hermitian metric on the trivial bundle for each $j=1,\dots, r-1$, and $H_r=h_1\otimes h_r^{-1}\otimes h_X$ a Hermitian metric on $K_X^r$. For each $j=1,\dots, r$, we denote by $F_{H_j}$ the curvature associated with the metric $H_j$. Let $\varphi: X\rightarrow [-\infty,\infty)$ be a quasi-subharmonic function, i.e., a locally integrable function that is locally a sum of a subharmonic function and a smooth function (cf. \cite{GZ1}). Note that we omit the function which is identically $-\infty$ from the definition of the quasi-subharmonic function. The quasi-subharmonic function $\varphi$ is said to be an {\it $F_{H_r}$-subharmonic function} if the following holds in the sense of the distribution (cf. \cite[Section 8]{GZ1}):
\begin{align*}
\inum\partial\bar{\partial}\varphi+\inum F_{H_r}\geq 0.
\end{align*}
As we can easily see from the definition, for each $N\in\Z_{\geq1}$ and each $q_N\in H^0((K_X^r)^N)$, $\frac{1}{N}\log|q_N|_{H_r}^2$ is an $F_{H_r}$-subharmonic function, where $|q_N|_{H_r}^2$ is a square of the norm of $q_N$ measured by $H_r$. We consider the following PDE on $X$ defined by using an $F_{H_r}$-subharmonic function $\varphi$:
\begin{align}
\Delta_{\omega_X}\xi+\sum_{j=1}^r4k_j^\prime e^{(v_j,\xi)}v_j=-2\inum \Lambda_{\omega_X}F_h, \label{HEeq0}
\end{align}
where each symbol is defined as follows:
\begin{itemize}
\item Let $V$ be a vector space defined as $V\coloneqq \{x=(x_1,\dots, x_r)\mid x_1+\cdots +x_r=0\}$, which is identified with the set of trace-free diagonal matrices of rank $r$. Then for each $j=1,\dots, r$, $v_j \in V$ is a vector defined as $v_j \coloneqq u_{j+1}-u_j$, where $u_1,\dots, u_r$ is the canonical basis on $\R^r$.
\item We denote by $(\cdot, \cdot)$ the standard inner product on $\R^r$.
\item We denote by $\xi:X\rightarrow V$ a $V$-valued function which is a solution of equation (\ref{HEeq0}) in some sense.
\item We define $k_1^\prime,\dots, k_r^\prime$ as $k_j^\prime\coloneqq |1|_{H_j}$ $(j=1,\dots, r-1)$ and $k_r^\prime\coloneqq e^\varphi$, where $1$ is the canonical section of the trivial bundle, and $|1|_{H_j}$ is the norm measured by $H_j$.
\item We denote by $\Delta_{\omega_X}=-2\inum\Lambda_{\omega_X}\partial \bar{\partial}$ the geometric Laplacian.
\end{itemize}
Suppose that $X$ is a non-compact Riemann surface. Let $f_X:X\rightarrow \R$ be a smooth strictly subharmonic function such that $\{x\in X\mid f_X(x)\leq c\}$ is a compact subset for each $c\in\R$. We take a $c\in\R$ and set
\begin{align*}
Y&\coloneqq \{x\in X\mid f_X(x)<c\}, \\
\partial Y&\coloneqq f_X^{-1}(c), \\
\overline{Y}&\coloneqq Y\cup \partial Y.
\end{align*}
Our main theorem is as follows:
\begin{theo}\label{main theorem 1} {\it For each $V$-valued continuous function $\eta=(\eta_1,\dots, \eta_r):\partial Y\rightarrow V$, there exists a $V$-valued function $\xi=(\xi_1,\dots, \xi_r):Y\rightarrow V$ that satisfies the following:
\begin{enumerate}[(a)]
\item \label{a} $\xi$ is a $C^{1,\alpha}$-function for any $\alpha\in(0,1)$ and solves equation (\ref{HEeq0}) in the sense of the distribution.
\item \label{b} The following boundary condition holds: 
\begin{align*}
\lim_{z\to\zeta}\xi(z)=\eta(\zeta) \ \text{for all $\zeta\in\partial Y$}.
\end{align*} 
Moreover, any $V$-valued function that satisfies conditions $(\ref{a})$ and $(\ref{b})$ is unique.
\end{enumerate}
}
\end{theo}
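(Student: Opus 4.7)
My plan is a regularization-plus-limit argument. Since the only singular coefficient in (\ref{HEeq0}) is $k'_r = e^{\varphi}$, I approximate $\varphi$ by a decreasing sequence $\varphi_n \searrow \varphi$ of smooth quasi-subharmonic functions on a neighbourhood of $\overline{Y}$, obtained via Demailly-type regularization. The regularized equations, with $\varphi$ replaced by $\varphi_n$, have smooth positive coefficients $k'_{r,n}=e^{\varphi_n}$. For each $n$ I first solve the associated Dirichlet problem, and then pass to the limit using uniform a priori estimates.

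\textbf{Regularized problem and uniform estimates.} For smooth positive coefficients, equation (\ref{HEeq0}) is the Euler--Lagrange equation of the strictly convex functional
\begin{align*}
\mathcal{E}_n[\xi]=\int_Y\Bigl(\tfrac{1}{2}|d\xi|^2+4\sum_{j=1}^r k'_{j,n}\,e^{(v_j,\xi)}+(2\inum\Lambda_{\omega_X}F_h,\xi)\Bigr)\vol
\end{align*}
on the affine space of $V$-valued $H^1$-maps with boundary trace $\eta$ on $\partial Y$. Strict convexity, coercivity, and lower semicontinuity yield a unique minimizer $\xi_n$, smooth in $Y$ and continuous up to $\partial Y$ by standard elliptic regularity. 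Monotonicity $\varphi_n\searrow\varphi$ gives $k'_{r,n}\le k'_{r,1}$, so the singular term is uniformly dominated. Constructing coordinate-wise sub- and super-solutions --- smooth extensions of $\eta$ into $Y$ modified by large multiples of the strictly subharmonic $f_X-c$ in the various $v_j$-directions --- produces uniform $L^\infty$ bounds on $\xi_n$ via the maximum principle; interior Calder\'on--Zygmund and Schauder estimates then upgrade these to uniform $C^{1,\alpha}_{\mathrm{loc}}$ bounds. A subsequence converges in $C^{1,\alpha}_{\mathrm{loc}}$ to a limit $\xi$, and monotone convergence $k'_{r,n}\searrow e^\varphi$ together with locally uniform convergence of $e^{(v_r,\xi_n)}$ lets us pass the nonlinear term to the limit in the distributional form of (\ref{HEeq0}).

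\textbf{Boundary behaviour and uniqueness.} For the boundary condition, the strictly subharmonic $f_X-c$ furnishes classical Perron-type barriers near $\partial Y$ that propagate the continuous datum $\eta$ to the limit $\xi$, via a comparison argument for the approximants that is stable in $n$. Uniqueness follows from a standard integration-by-parts comparison: if $\xi,\tilde\xi$ are two $C^{1,\alpha}$-solutions agreeing on $\partial Y$, then $w=\xi-\tilde\xi$ satisfies $\Delta_{\omega_X}w+\sum_j 4k'_j(e^{(v_j,\xi)}-e^{(v_j,\tilde\xi)})v_j=0$ in the distributional sense; testing against $w$ and using that $y\mapsto \sum_j k'_j e^{(v_j,y)}v_j$ is monotone (it is the gradient of a convex function) forces $|\nabla w|\equiv 0$, hence $w\equiv 0$.

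\textbf{Main obstacle.} The hardest point will be the uniform $L^\infty$ estimate at places where $e^\varphi$ vanishes: there the equation loses coercivity in the $v_r$-direction, so the barriers must be built using the remaining smooth coefficients $k'_1,\ldots,k'_{r-1}$ together with the cyclic geometry $v_1+\cdots+v_r=0$ to replace the missing direction. A secondary delicacy is simultaneously identifying the distributional limit of the singular term $k'_{r,n}e^{(v_r,\xi_n)}v_r$ and transporting the boundary trace through the regularization, since $\varphi$ itself is only controlled in $L^1_{\mathrm{loc}}$.
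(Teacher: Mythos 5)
Your overall strategy (regularize $\varphi$, solve the smooth problems variationally, pass to the limit) is a legitimate alternative to the paper's route, which instead builds sub/supersolutions directly for the singular equation and runs a Schauder fixed-point argument on a compact convex set of quasi-subharmonic functions, with uniqueness from a potential-theoretic maximum principle. Indeed the paper itself remarks that the smooth-coefficient case is accessible by known methods, so the substance of the theorem lives exactly in the two places where your proposal is thin, and both contain genuine gaps. (A smaller issue, fixable by approximating the data: a merely continuous $\eta$ need not lie in $H^{1/2}(\partial Y)$, so the affine space of $H^1$-maps with trace $\eta$ is not well defined as stated.)

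The first gap is the uniform $L^\infty$ bound. Barriers of the form ``smooth extension of $\eta$ plus large multiples of $f_X-c$'' cannot satisfy the coupled sub/supersolution inequalities for this system. The $j$-th component of the nonlinearity is increasing in $\xi_j$ but \emph{decreasing} in $\xi_{j\pm1}$, so a supersolution for the $j$-th component must dominate a term of the form $4k_j^\prime e^{\xi^{(+)}_{j+1}-\xi^{(-)}_j}$, in which the upper and lower barriers enter with opposite signs; these are exactly conditions (\ref{iv0})--(\ref{iv2}) of Lemma \ref{important lemma}. If $\xi^{(\pm)}_j=\phi_j\pm A(c-f_X)$, that term grows like $e^{2A(c-f_X)}$ in the interior of $Y$ while $\Delta_{\omega_X}\xi^{(\pm)}_j$ grows only linearly in $A$, so no choice of $A$ works. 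The paper resolves this by replacing $A(c-f_X)$ with the solution $\rho$ of the auxiliary Kazdan--Warner-type problem (\ref{KW}), whose own exponential nonlinearity $e^{-r\rho}$ is what dominates the coupling; some such nonlinear barrier is unavoidable. Note also that your diagnosis of the obstacle is misplaced: the vanishing of $e^{\varphi}$ is harmless for the barriers, which only need the coefficients bounded \emph{above}, and $e^{\varphi}$ (hence $e^{\varphi_n}\le e^{\varphi_1}$, uniformly in $n$) is bounded above on the compact set $\overline{Y}$ by upper semicontinuity (Remark \ref{locally bounded}). The second gap is uniqueness. Testing the equation for $w=\xi-\tilde\xi$ against $w$ presupposes $w\in H^1_0(Y)$, i.e.\ global square-integrability of $\nabla w$ and a discardable boundary flux; but solutions are only $C^{1,\alpha}$ in the interior and continuous up to $\partial Y$, so nothing controls $\nabla w$ near the boundary and $\int_Y|\nabla w|^2$ need not even be finite. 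The paper avoids this entirely: by Proposition \ref{prop2} the function $\log\sum_{j}e^{\xi_j-\tilde\xi_j}$ is subharmonic, hence $\le\log r$ by the maximum principle and its boundary values, while it is $\ge\log r$ pointwise because $\sum_j(\xi_j-\tilde\xi_j)=0$; this uses only continuity up to the boundary. Your monotonicity observation is the right structural fact, but it has to be run through a comparison/maximum principle rather than an energy identity.
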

\begin{rem} Let $\xi:U\rightarrow V$ be a $V$-valued locally $L^\infty$-function defined on an open subset $U\subseteq X$ of $X$. We say that {\it $\xi$ solves equation (\ref{HEeq0}) in the sense of the distribution}, or that {\it $\xi$ is a weak solution to equation (\ref{HEeq0})} if the following holds:
\begin{align*}
&\int_U\{(\xi,\Delta_{\omega_X}\phi)+(\sum_{j=1}^r4k_j^\prime e^{(v_j,\xi)}v_j+2\inum \Lambda_{\omega_X}F_h,\phi)\}=0 \ \text{for all $\phi\in C^\infty_c(U,V)$}, 
\end{align*}
where we denote by $C^\infty_c(U,V)$ the space of all smooth $V$-valued functions with compact support, and the integral is taken with respect to the K\"ahler form $\omega_X$. Throughout the paper, we use the terms ``in the sense of the distribution" and ``weak solution" for equations or inequalities including the Laplace operator, not limited to equation (\ref{HEeq0}), in the sense described above. Note that for the Poisson equation, the definition of a weak solution is not unique (cf. \cite{FF1}). 
\end{rem}
\begin{rem}\label{potential function} We use the notion of the $F_{H_r}$-subharmonic function even when $X$ is a non-compact Riemann surface. Note that on such a surface, the $F_{H_r}$-subharmonic function can globally be expressed as the sum of a subharmonic function and a smooth function related to the curvature $F_{H_r}$. Specifically, if $X$ is not compact, we can take a global holomorphic frame $e:X\rightarrow K_X^r$ (cf. \cite[Section 30]{For1}) for the holomorphic line bundle $K_X^r\rightarrow X$. Then $\tilde{\varphi}\coloneqq \varphi-\log H_r(e,e)$ is a subharmonic function on $X$ since we have $\bar{\partial}\partial \log H_r(e,e)=F_{H_r}$.
\end{rem}
\begin{rem}\label{locally bounded} The coefficient $e^\varphi$ is a bounded function on any compact subset $K\subseteq X$. Although proving this assertion is straightforward even if $X$ itself is compact, but to avoid redundancy, we proceed by assuming that $X$ is a non-compact Riemann surface. Since $X$ is not compact, the $F_{H_r}$-subharmonic function $\varphi$ decomposes into the sum of a subharmonic function $\tilde{\varphi}$ and a smooth function $-\log H_r(e,e)$, as explained in Remark \ref{potential function}. This implies that $\varphi$ is an upper semicontinuous function and, therefore, attains its maximum on $K$ (see \cite[Chapter 2.1]{Ran1}). Consequently, the coefficient $e^\varphi$ is bounded on $K$. In particular, for every $V$-valued locally $L^\infty$-function $\xi:X\rightarrow V$, meaning that the function is a $V$-valued $L^\infty$-function over all compact subset $K\subseteq X$, the function $e^\varphi e^{(v_r,\xi)}$ is also a locally $L^\infty$-function. This ensures its well-definedness as a distribution.
\end{rem}
\begin{rem} Let $E^\vee$ be the dual vector bundle of $E$. The vector bundle $E$ is equipped with an isomorphism $S_E:E\rightarrow E^\vee$ defined as follows (cf. \cite{Hit1, LM1, LM3}):
\begin{align*}
S_E\coloneqq \left(
\begin{array}{ccc}
& &1\\
&\reflectbox{$\ddots$} &\\
1&&
\end{array}
\right): E\rightarrow E^{\vee}.
\end{align*}
A Hermitian metric $h_E$ on $E$ is said to be {\it real} (cf. \cite{Hit1, LM1,LM3}) if the above $S_E$ is isometric with respect to $h_E$ and $h_E^\vee$, where $h_E^\vee$ is the natural Hermitian metric on $E^\vee$ induced from $h_E$. From the arguments based on the uniqueness of the solution (cf. \cite[Section 7]{Hit1}, \cite[Corollary 3.24]{LM1}, \cite[Section 2.3.5]{LM3}), we can show that if the metric $(e^{\eta_1}h_1\left.\right|_{\partial Y},\dots, e^{\eta_r}h_r\left.\right|_{\partial Y})$ on the boundary is real, then the metric $(e^{\xi_1}h_1\left.\right|_Y,\dots, e^{\xi_r}h_r\left.\right|_Y)$ induced from the solution $\xi=(\xi_1,\dots,\xi_r)$ of the Dirichlet problem in Theorem \ref{main theorem 1} is also real.
\end{rem}

Equation (\ref{HEeq0}) is a generalization of Hitchin's equation \cite{Hit0} for diagonal harmonic metrics on cyclic Higgs bundles \cite{Bar1, DL1}, which was introduced in \cite[Example 1]{Miy2}. It should be noted that in \cite[Example 1]{Miy2}, only the case where $Y$ is a domain of $\C$ and $h$ is the metric induced by the standard metric on $\C$ is discussed. In Section \ref{section 2}, we explain the motivation behind introducing equation (\ref{HEeq0}) and solving its corresponding Dirichlet problem. In Section \ref{section 3}, we establish some fundamental a priori estimates to the solution of equation (\ref{HEeq0}) by slightly modifying the proofs of \cite[Theorem 2 and Theorem 3]{Miy2}. In Section \ref{section 4}, we give a proof of Theorem \ref{main theorem 1}. 
\section{Cyclic Higgs bundles with multi-valued Higgs fields}\label{section 2}
We first briefly recall the definition of cyclic Higgs bundles. Let $X$ be a connected Riemann surface. We carry over the symbols used in Section 1. We take a $q\in H^0(K_X^r)$. For each $j=1,\dots, r-1$, we set $\Phi(q)_{j+1,j}=1$ and $\Phi(q)_{1,r}=q$. We define $\Phi(q)\in H^0(\End E\otimes K_X)$ as $\Phi(q)\coloneqq \sum_{j=1}^{r-1}\Phi(q)_{j+1,j}+\Phi(q)_{1,r}$, where $\Phi(q)_{i,j}$ is considered to be the $(i,j)$-component of $\Phi(q)$, and $1$ (resp. $q$) is considered to be a $K_X^{-1}$ (resp. $K_X^{r-1}$)-valued holomorphic 1-form. We call $(E,\Phi(q))$ a cyclic Higgs bundle (cf. \cite{Bar1, DL1, LM1, LM2}). Cyclic Higgs bundles are examples of the cyclotomic Higgs bundles which were introduced in \cite{Sim3}. We set $k_1,\dots, k_r$ as $k_j=k_j^\prime=|1|_{H_j}$ $(j=1,\dots, r-1)$, $k_r=|q|_{H_r}$. The corresponding Hitchin's equation \cite{Hit0} for a diagonal harmonic metric $(e^{f_1}h_1,\dots, e^{f_r}h_r)$ is then given by:
\begin{align}
\Delta_{\omega_X}\xi+\sum_{j=1}^r4k_je^{(v_j,\xi)}v_j=-2\inum \Lambda_{\omega_X}F_{h}, \label{HEeq}
\end{align}
where $\xi$ is defined as $\xi\coloneqq (f_1,\dots, f_r)$. Equation (\ref{HEeq}) is also called Toda lattice with opposite sign (see \cite{GL1}). As we noted in Section 1, $\log |q|_{H_r}$ is an $F_{H_r}$-subharmonic function, and thus, equation (\ref{HEeq}) is a special case of equation (\ref{HEeq0}) if we impose the condition $f_1+\cdots +f_r=0$ on $f_1,\dots, f_r$.

Let $N\in \Z_{\geq 2}$ and $q_N\in H^0((K_X^r)^N)$. We next consider a cyclic Higgs bundle $(E, \Phi(q_N^{1/N}))$ with the following multi-valued Higgs field: 
\begin{align*}
\Phi(q_N^{1/N})\coloneqq 
\left(
\begin{array}{cccc}
0 & && q_N^{1/N}\\
1 & \ddots && \\
&\ddots&\ddots& \\
&&1&0
\end{array}
\right).
\end{align*}
It can be observed that Hitchin's equation for diagonal harmonic metrics on cyclic Higgs bundles depends only on the absolute value of $q$. Therefore, although the Higgs field $\Phi(q_N^{1/N})$ is multi-valued, Hitchin's equation for a diagonal harmonic metric on $(E,\Phi(q_N^{1/N}))$ is well-defined, and the equation for diagonal harmonic metrics on a cyclic Higgs bundle with a multi-valued Higgs field $(E,\Phi(q_N^{1/N}))$ coincides with equation (\ref{HEeq0}) with an $F_{H_r}$-subharmonic function $\frac{1}{N}\log|q_N|_{H_r}^2$. Let $h_N$ be a solution to Hitchin's equation for a cyclic Higgs bundle $(E,\Phi(q_N^{1/N}))$ with a multi-valued Higgs field $\Phi(q_N^{1/N})$. If we choose a well-defined local section $q_N^{1/N}$ on an open subset $U\subseteq X$, then $(E,\Phi(q_N^{1/N}), h_N)$ is a harmonic bundle on $U$. Alternatively, if we choose a ramified covering $\pi: Z_N\rightarrow X$ where $q_N^{1/N}$ is a well-defined section of $\pi^\ast (K_X^{r-1})\otimes K_{Z_N}\rightarrow Z_N$, then the triplet $(\pi^\ast E, \pi^\ast \Phi(q_N^{1/N}), \pi^\ast h_N)$ becomes a harmonic bundle over $Z_N$. 

As we noted above, Hitchin's equation for a diagonal harmonic metric on $(E,\Phi(q_N^{1/N}))$ is well-defined, although the Higgs field $\Phi(q_N^{1/N})$ is multi-valued. Additionally, it is evident that any functions constructed from a solution $h_N$ to the Hitchin's equation will be well-defined over $X$, if they depend solely on the absolute value of $q_N$ . For example, the norm $|\Phi(q_N^{1/N})|^2_{h_N,h_X}$ and the bracket $\Lambda_{\omega_X}[\Phi(q_N^{1/N})\wedge\Phi(q_N^{1/N})^{\ast h_N}]$ of the Higgs field $\Phi(q_N^{1/N})$ are well-defined functions. Furthermore, obviously, the amounts that can be written as a constant multiple or combination of them, such as energy density $e(h_N)$ of harmonic maps and the sectional curvature $\kappa(h_N)$ of the image of harmonic maps (cf. \cite{DL1, Li1}), are also well-defined functions on $X$.

In this paper, we pose the problem of considering what might happen when $N$ approaches infinity. More specifically, we pose the problem to consider the asymptotic behavior of a sequence of Hermitian metrics $(h_N)_{N\in \mathbb{N}}$ such that for each $N$, $h_N$ is a diagonal solution to Hitchin's equation for the cyclic Higgs bundle $(E,\Phi(q_N^{1/N}))$ with a multi-valued Higgs field when $N$ tends to infinity. To specify what we mean by ``consider the asymptotic behavior", we pose the following specific questions:
\begin{itemize}
\item Does a sequence $(h_N)_{N\in\N}$ converge a Hermitian metric in some topology? Also, how fast will it converge?
\item How does sequences of well-defined functions, such as $(e(h_N))_{N\in\N}, (\kappa(h_N))_{N\in\N}, \dots $ behave as $N$ approaches infinity? For example, is it possible to choose a non-trivial sequence $(h_N)_{N\in\N}$ so that the averages $\int_X e(h_N), \int_X \kappa(h_N),...$ monotonically decreases or increases? How fast will they decay or increase? Also, what if we looked at the behavior at each point of $X$ rather than the average?
\item Is it possible to evaluate the value of the average of well-defined functions such as $\int_Xe(h_N), \int_X \kappa(h_N),...$ in the limit when $N$ tends to infinity? Also, what if we looked at the behavior at each point of $X$ rather than the average?
\item The difference of Hermitian metrics $h_{N,j}^{-1}\otimes h_{N, j+1}$ $(j=1,\dots, r-1)$ defines a metric $d_{N,j}$ on $X$ (see \cite{LM1}), where we denote by $h_{N,j}$ the $j$-th component of the Hermitian metric: $h_N=(h_{N,1},\dots, h_{N,r})$. Moreover, the Hermitian metric $h_{N,r}^{-1}\otimes h_{N,1}$ coupled with $q_N^{1/N}$ defines a degenerate metric $d_{N,r}$ on $X$ (see \cite{LM1}). How does the sequence of metric spaces $(X, d_{N, 1},\dots, d_{N, r-1}, d_{N,r})_{N\in\N}$ behave? For example, is the completeness of the metrics (cf. \cite{LM1}) preserved in the limit? 
\end{itemize}

Let $SH(X, F_{H_r})$ be the set of all $F_{H_r}$-subharmonic functions. As we noted in Section 1, for each positive integer $N$ and each $q_N\in H^0((K^r_X)^N)$, $\frac{1}{N}\log|q_N|_{H_r}^2$ is $F_{H_r}$-subharmonic, and moreover, all elements of such form are dense in $SH(X, F_{H_r})$ with respect to the $L^1_{loc}$-topology (see \cite{GZ1}). Therefore, any $F_{H_r}$-subharmonic function $\varphi\in SH(X, F_{H_r})$ is a limit of a sequence of $(\frac{1}{N}\log|q_N|^2_{H_r})_{N\in\N}$, where $q_N\in H^0((K_X^r)^N)$, at least for $L^1_{loc}$-topology. In considering the above problems, we introduce equation (\ref{HEeq0}) as a limit of Hitchin's equation for diagonal harmonic metrics on cyclic Higgs bundles $(E,\Phi(q_N^{1/N}))$ with a multi-valued Higgs field when $N$ tends to infinity.

If the coefficient $e^\varphi$ is smooth, we can directly apply methods from \cite{Don0, Don1, Sim1} either to find a solution for equation (\ref{HEeq0}) or for its evolution equation (cf. \cite{Miy2}, to which we refer the reader for further explanations). More specifically, we can construct a time-global solution to the evolution equation of (\ref{HEeq0}) on a compact manifold with a possibly empty boundary with the Dirichlet boundary condition, by using the techniques of the evolution equation of Hermitian-Einstein equation \cite{Don0, Sim1}. If the manifold has a non-empty boundary, we can demonstrate, by using the Donaldson's argument \cite{Don1}, that the time global solution to the evolution equation converges to a solution of equation (\ref{HEeq0}) that satisfies a Dirichlet boundary condition. For compact manifolds without boundary by using the functional of equation (\ref{HEeq0}) (cf. \cite{Miy1}) we can show the convergence of the time global solution of the evolution equation to a solution of equation (\ref{HEeq0}). We can also solve equation (\ref{HEeq0}) by directly applying \cite[Theorem 1]{Miy1} when the coefficient $e^\varphi$ is smooth. In this paper, we extend the Dirichlet problem to a more general case. 
The Dirichlet problem for the Hermitian-Einstein equation was first solved in \cite{Don1}. This theorem holds significant utility, particularly when constructing a global solution to the Hermitian-Einstein equation on non-compact manifolds with compact exhaustions \cite{LM1, LM2, Moc1}. It is also worth noting that the Dirichlet problem for elliptic equations has been studied over the years, with specific emphasis on its link to potential theory (cf. \cite{GZ1, Ran1}). While there are multiple aspects to investigate in equation (\ref{HEeq0}), our paper primarily focuses on the Dirichlet problem, considering its distinctive significance.

\begin{rem} 
The aforementioned problem focuses on the increasing number of zeros in the holomorphic $r$-differential. Conversely, one could conceive a dual problem that considers the consequences of an increasing number of poles in the $r$-differential. While this paper will not delve into this topic in depth, it will be discussed in a subsequent paper.
\end{rem}
\begin{rem} The notion of cyclic Higgs bundles can be generalized to the case where the vector bundle is of the form $E=L_1\oplus\cdots \oplus L_r$ with arbitrary holomorphic line bundles $L_1, \dots, L_r\rightarrow X$ (cf. \cite{DL1}). Equation (\ref{HEeq0}) can also be generalized to such a case, and Theorem \ref{main theorem 1} can be extended to the more generalized equation.
\end{rem}

\begin{rem}
If the purpose is simply to solve equation (\ref{HEeq0}), there is no need to explicitly state the existence and convergence of the time global solution of the evolution equation. However, the heat equation itself is very interesting, so we have explicitly written the existence and convergence of the global time solution as above. For example, an interesting question is, is it possible to construct a time global solution that is complete (cf. \cite{LM1}) at each instant?
\end{rem}
\begin{rem} The problems described above are both influenced by and motivated by the study of the asymptotic behavior of the complex polynomials and sections of holomorphic line bundles (see, e.g., \cite{Ran1, ST1, SZ1} and the references therein).
\end{rem}

\section{Fundamental a priori estimates}\label{section 3}
Before beginning the proof of Theorem \ref{main theorem 1}, we establish some fundamental a priori estimates for the solution of equation (\ref{HEeq0}) by slightly modifying the proofs of \cite[Theorem 2 and Theorem 3]{Miy2}. For the Hermitian-Einstein equation of Higgs bundles, the following estimates have been established in \cite[Lemma 3.1 and Lemma 10.1]{Sim1} (see also \cite{LM1, LM2, Sim2}). We will use Proposition \ref{prop2} below in Section \ref{section 4} to prove the uniqueness of the boundary value problem in Theorem \ref{main theorem 1}. We carry over the notation from Section \ref{section 1}. 
The following holds:
\begin{prop}\label{prop2}{\it Let $\xi=(\xi_1,\dots, \xi_r),\xi^\prime=(\xi_1^\prime,\dots,\xi_r^\prime):X\rightarrow V$ be locally $L^\infty$-functions. 
Suppose that there exist $V$-valued locally $L^1$-functions $\tilde{\xi},\tilde{\xi}^\prime: X\rightarrow V$ such that $\Delta_{\omega_X}\xi=\tilde{\xi}$ and $\Delta_{\omega_X}\xi^\prime=\tilde{\xi}^\prime$ as distributions on $X$. Then the following holds in the sense of the distribution:
\begin{align}
&\Delta_{\omega_X}\log|\sum_{j=1}^re^{(\xi_j-\xi^\prime_j)}| \notag \\
\leq &|\Delta_{\omega_X}\xi+\sum_{j=1}^r4k_j^\prime e^{(v_j,\xi)}v_j+2\inum \Lambda_{\omega_X} F_h|+|\Delta_{\omega_X}\xi^\prime +\sum_{j=1}^r4k_j^\prime e^{(v_j,\xi^\prime)}v_j+2\inum \Lambda_{\omega_X} F_h|. \label{LSE}
\end{align}
In particular, if $\xi$ and $\xi^\prime$ solve equation (\ref{HEeq0}) in the sense of the distribution, then we have 
\begin{align*}
\Delta_{\omega_X}\log|\sum_{j=1}^re^{(\xi_j-\xi^\prime_j)}| \leq 0.
\end{align*}
}
\end{prop}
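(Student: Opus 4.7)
\emph{Proof plan.} The plan is to first verify (\ref{LSE}) pointwise for smooth $\xi,\xi'$ and then transfer to the distributional setting by local mollification, exploiting that $\xi,\xi'\in L^\infty_{loc}$ and $\tilde\xi,\tilde\xi'\in L^1_{loc}$.

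For the smooth case I set $s:=\xi-\xi'\in V$ and $\sigma:=\sum_{j=1}^r e^{s_j}$; note that $\sigma\geq r$ by AM--GM applied to $\sum_j s_j=0$. A direct chain-rule computation in local holomorphic coordinates, followed by Cauchy--Schwarz on the weights $e^{s_j/2}$, yields the pointwise bound
\begin{align*}
\Delta_{\omega_X}\log\sigma \leq \frac{1}{\sigma}\sum_{j=1}^r e^{s_j}\Delta_{\omega_X}s_j.
\end{align*}
Writing $E_\xi,E_{\xi'}$ for the two vectors whose norms appear on the right-hand side of (\ref{LSE}), I replace $\Delta_{\omega_X}s_j$ by $(E_\xi-E_{\xi'})_j-\sum_k 4k_k'(e^{(v_k,\xi)}-e^{(v_k,\xi')})(v_k)_j$. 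Using the cyclic identities $\sum_j e^{s_j}(v_k)_j=e^{s_{k+1}}-e^{s_k}$ and $(v_k,\xi)-(v_k,\xi')=s_{k+1}-s_k$ (with indices mod $r$), the potential contribution collapses to
\begin{align*}
\sum_{k=1}^r 4k_k'\,e^{(v_k,\xi')+s_k}\bigl(e^{s_{k+1}-s_k}-1\bigr)^2\geq 0,
\end{align*}
which can be dropped while preserving the inequality. The remaining scalar $\sigma^{-1}\sum_j e^{s_j}\bigl((E_\xi)_j-(E_{\xi'})_j\bigr)$ is the Euclidean inner product of the probability vector $p_j:=e^{s_j}/\sigma$ (satisfying $|p|\leq 1$) with $E_\xi-E_{\xi'}$, so Cauchy--Schwarz bounds it by $|E_\xi|+|E_{\xi'}|$.

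For the weak case, let $\rho_\varepsilon$ be a standard mollifier on coordinate charts and set $\xi_\varepsilon:=\xi*\rho_\varepsilon$, $\xi'_\varepsilon:=\xi'*\rho_\varepsilon$. Convolution commutes with the distributional Laplacian, so $\Delta_{\omega_X}\xi_\varepsilon=\tilde\xi*\rho_\varepsilon\to\tilde\xi$ in $L^1_{loc}$; dominated convergence (using the uniform local bound on $\xi_\varepsilon$) gives $e^{(v_k,\xi_\varepsilon)}\to e^{(v_k,\xi)}$ in every $L^p_{loc}$; and $k_k'$ is locally bounded by Remark \ref{locally bounded}, so $E_{\xi_\varepsilon}\to E_\xi$ in $L^1_{loc}$, and analogously for $\xi'$. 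Since $\log\sigma_\varepsilon$ lies locally in $[\log r,C]$ and converges to $\log\sigma$ in $L^1_{loc}$, $\Delta_{\omega_X}\log\sigma_\varepsilon\to\Delta_{\omega_X}\log\sigma$ as distributions. Applying the smooth inequality to $(\xi_\varepsilon,\xi'_\varepsilon)$, pairing against a non-negative test function, and sending $\varepsilon\to 0$ yields (\ref{LSE}).

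The main obstacle is recognising the sign-definite cyclic rearrangement $(e^{(v_k,\xi)}-e^{(v_k,\xi')})(e^{s_{k+1}}-e^{s_k})=e^{(v_k,\xi')+s_k}(e^{s_{k+1}-s_k}-1)^2$: this compatibility between the directions $v_k$ and the difference variable $s$ is the algebraic core of the Simpson-type uniqueness argument adapted to the cyclic setting, and without it one would only obtain a scalar identity rather than a one-sided distributional inequality. Once this is in place, the mollification step is routine bookkeeping.
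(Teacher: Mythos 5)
Your proposal is correct and follows essentially the same route as the paper: the pointwise bound $\Delta\log\sigma\leq\sigma^{-1}\sum_j e^{s_j}\Delta s_j$ is exactly the paper's Lemma 8 (inequality (\ref{b3}) with $b_1=\sum_j e^{s_j/2}u_j$), established there by the same mollification-and-dominated-convergence argument, and your sign-definite cyclic term plus the Cauchy--Schwarz step with the probability vector $p$ is precisely ``the calculation in \cite[Proof of Theorem 2]{Miy2}'' that the paper invokes without reproducing. The only cosmetic difference is that you mollify $\xi$ and $\xi^\prime$ separately and run the whole chain of inequalities at the smooth level, whereas the paper mollifies the difference $G=\xi-\xi^\prime$ and performs the algebra on the limit objects; both are sound.
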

\begin{prop}\label{prop3}{\it Suppose that $\xi=(\xi_1,\dots, \xi_r)$ is an $L^\infty$-weak solution to equation (\ref{HEeq0}). Then the following holds in the sense of the distribution:
\begin{align}
\Delta_{\omega_X}\log\bigl(\sum_{j=1}^r4k_j^\prime e^{(v_j,\xi)}\bigr)\leq -\frac{\left|\sum_{j=1}^r4k_j^\prime e^{(v_j,\xi)}v_j\right|^2}{\left|\sum_{j=1}^r4k_j^\prime e^{(v_j,\xi)}\right|}+2\inum\Lambda_{\omega_X} F_{h_X}, \label{LSE2}
\end{align}
where $F_{h_X}$ is the curvature of the metric $h_X$.
}
\end{prop}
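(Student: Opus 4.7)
The plan is to adapt the log-sum-exp / Cauchy--Schwarz calculation from \cite[Theorem~3]{Miy2} (whose prototype is Simpson's a priori estimate \cite{Sim1}), replacing the pointwise identity for $\Delta_{\omega_X}\log k_r'$ valid when that coefficient is smooth by the distributional inequality encoded in $F_{H_r}$-subharmonicity. Set $G:=\sum_{j=1}^r 4k_j'e^{(v_j,\xi)}$ and $W:=\sum_{j=1}^r 4k_j'e^{(v_j,\xi)}v_j$; both are locally $L^\infty$ by Remark~\ref{locally bounded}. Writing $G=\sum_j e^{w_j}$ with $w_j:=\log(4k_j')+(v_j,\xi)$, introduce the weights $p_j:=e^{w_j}/G$, so that $\sum_j p_j=1$ and $\sum_j p_j v_j=W/G$.

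First I would carry out the formal pointwise log-sum-exp computation:
\begin{align*}
\inum\,\partial\bar{\partial}\log G = \inum\sum_j p_j\,\partial\bar{\partial}w_j + \frac{\inum}{2}\sum_{j,k}p_j p_k\,(\partial w_j-\partial w_k)\wedge\overline{(\partial w_j-\partial w_k)},
\end{align*}
whose second (``variance'') piece is a non-negative $(1,1)$-form. Applying $-2\inum\Lambda_{\omega_X}$ throws this contribution away in the correct direction and yields
\begin{align*}
\Delta_{\omega_X}\log G \leq \sum_j p_j(v_j,\Delta_{\omega_X}\xi)+\sum_j p_j\Delta_{\omega_X}\log k_j'.
\end{align*}
Substituting equation (\ref{HEeq0}) for $\Delta_{\omega_X}\xi$ and using $\sum_j p_j v_j=W/G$ evaluates the first sum to $-|W|^2/G-(2\inum/G)\Lambda_{\omega_X}(W,F_h)$. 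For the second sum, smoothness of $k_j'$ for $j<r$ combined with Remark~\ref{potential function} produces a closed-form expression in terms of $\Lambda_{\omega_X}F_{H_j}$, while for $j=r$ the identity $\log k_r'=\varphi$ together with $\inum\partial\bar{\partial}\varphi+\inum F_{H_r}\geq 0$ bounds $\Delta_{\omega_X}\varphi$ distributionally by a multiple of $\Lambda_{\omega_X}F_{H_r}$. Assembling these via the algebraic identity $F_{H_j}=(v_j,F_h)+F_{h_X}$ and $\sum_j p_j v_j=W/G$, the $(W,F_h)$ contributions from the two sums cancel and the remaining curvature is $2\inum\Lambda_{\omega_X}F_{h_X}$, giving (\ref{LSE2}).

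The principal obstacle is turning the above formal manipulation into a rigorous distributional statement, since $\xi\in L^\infty_{\mathrm{loc}}$ renders the wedge products $\partial w_j\wedge\overline{\partial w_j}$ meaningless pointwise and $\varphi$ need not be locally bounded. Following the modification of \cite[Theorem~3]{Miy2} announced at the start of Section~\ref{section 3}, the natural remedy is a local regularization argument: decompose $\varphi=\tilde\varphi-\log H_r(e,e)$ in a chart (Remark~\ref{potential function}), approximate $\tilde\varphi$ from above by smooth subharmonic convolutions $\tilde\varphi_\epsilon\searrow\tilde\varphi$, and simultaneously mollify $\xi$. The pointwise calculation is then valid for each smoothed datum, and one passes to the distributional limit using Remark~\ref{locally bounded} (uniform local $L^\infty$ bounds on the nonlinear coefficients), dominated convergence for the exponential terms, and $L^1_{\mathrm{loc}}$-convergence of the curvature ingredients.
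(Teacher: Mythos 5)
Your proposal is correct and follows essentially the same route as the paper: the component functions $w_j=\log(4k_j')+(v_j,\xi)$ and the component-wise distributional bounds you derive from the weak equation and the $F_{H_r}$-subharmonicity of $\varphi$ are exactly the data $G$, $\tilde G$ and inequality (\ref{important inequality}) to which the paper applies its mollification Lemma \ref{lemma1}, which in turn packages your log-sum-exp convexity computation and the passage to the limit. The final assembly via $F_{H_j}=(v_j,F_h)+F_{h_X}$ and the cancellation of the $(W,F_h)$ terms likewise matches the calculation the paper imports from \cite[Proof of Theorem 3]{Miy2}.
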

\begin{rem}\label{very important remark} Let $f_1,\dots, f_r: X\rightarrow \R$ be locally integrable functions. Then the following inequality holds:
\begin{align*}
\max\{f_1,\dots, f_r\}\leq \log(\sum_{j=1}^re^{f_j})\leq \max\{f_1,\dots, f_r\}+\log(r).
\end{align*}
Consequently, $\log(\sum_{j=1}^re^{f_j})$ is also a locally integrable function. Therefore, the distribution $\Delta_{\omega_X}\log(\sum_{j=1}^re^{f_j})$ is well-defined. In particular, left-hand sides of inequality (\ref{LSE}) and (\ref{LSE2}) are both well-defined. Furthermore, as highlighted in Remark \ref{locally bounded}, the coefficient $e^\varphi$ is a locally bounded function, ensuring that the right-hand sides of (\ref{LSE}) and (\ref{LSE2}) are both unambiguously defined as distributions.
\end{rem}
While the following concept is generally familiar, we provide a specific definition for the sake of clarity:
\begin{defi}[cf. \cite{GZ1, Ran1}] \label{mollifier} 
Let $B(0,1)\coloneqq \{z\in\C\mid |z|<1\}$ be the unit open ball in the complex plane $\C$. We call a function $\chi:\C\rightarrow \R$ a {\it mollifier} if it satisfies the following conditions (cf. \cite[p.49, Theorem 2.7.2]{Ran1}):
\begin{align*}
\chi\in C^\infty(\C,\R), \ \chi\geq 0, \ \chi(z)=\chi(|z|), \ \supp\chi\subseteq B(0,1), \ \int_\C\chi=1,
\end{align*}
where we denote by $\supp\chi$ the support of the function $\chi$. Let $\chi$ be a mollifier. In the same way as \cite[p.49, Theorem 2.7.2]{Ran1}, for $\epsilon>0$ we define a function $\chi_\epsilon$ as follows: 
\begin{align*}
\chi_\epsilon(z)\coloneqq \frac{1}{\epsilon^2}\chi\left(\frac{z}{\epsilon}\right)\ \text{for $z\in\C$}.
\end{align*}
\end{defi}
\begin{rem} The notion of a mollifier is usually defined for a class of functions broader than the one above.
\end{rem}
We prepare the following lemma:
\begin{lemm}\label{lemma1}
{\it Let $G=(f_1,\dots, f_r):B(0,1)\rightarrow \R^r$ be an $\R^r$-valued locally $L^1$-function with respect to the Euclidean metric. We set $b_1\coloneqq \sum_{j=1}^re^{f_j/2}u_j, b_2\coloneqq \sum_{j=1}^re^{f_j}u_j$. We assume that there exists an $\R^r$-valued locally integrable function $\tilde{G}=(\tilde{f}_1,\dots, \tilde{f}_r)$ such that
\begin{align}
\Delta f_j\leq \tilde{f}_j \ \text{in the sense of the distribution for all $j=1,\dots, r$},
\end{align}
where we denote by $\Delta$ the Laplacian $\left(-4\frac{\partial^2}{\partial z\partial \bar{z}}\right)$ for the Euclidean metric. Then the following inequality holds in the sense of the distribution:
\begin{align}
\Delta \log|b_1|^2\leq \left(\tilde{G},b_2/|b_1|^2\right).\label{b2}
\end{align}
}
\end{lemm}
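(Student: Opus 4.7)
The plan is to reduce to the case of smooth $G$ by mollification and then pass to the distributional limit; the smooth case is a direct computation, while the limit requires some care.

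First, for smooth $G$, write $S\coloneqq \sum_j e^{f_j}=|b_1|^2$. A direct calculation gives
\begin{align*}
\partial_{\bar z}\partial_z\log S=\frac{\sum_j e^{f_j}|\partial_z f_j|^2}{S}+\frac{\sum_j e^{f_j}\partial_{\bar z}\partial_z f_j}{S}-\frac{|\sum_j e^{f_j}\partial_z f_j|^2}{S^2}.
\end{align*}
The weighted Cauchy--Schwarz inequality $|\sum_j e^{f_j}\partial_z f_j|^2\le S\cdot \sum_j e^{f_j}|\partial_z f_j|^2$ shows that, after multiplying by $-4$, the first and third terms on the right combine into a non-positive contribution. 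Using $-4\partial_{\bar z}\partial_z=\Delta$, the hypothesis $\Delta f_j\le \tilde f_j$, and the non-negativity of the weights $e^{f_j}/S$, the remaining (second) term gives the desired pointwise inequality $\Delta\log|b_1|^2\le \sum_j \tilde f_j e^{f_j}/S=(\tilde G,b_2/|b_1|^2)$.

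For general $f_j,\tilde f_j\in L^1_{\mathrm{loc}}$, I would mollify: $f_{j,\epsilon}\coloneqq f_j*\chi_\epsilon$ and $\tilde f_{j,\epsilon}\coloneqq \tilde f_j*\chi_\epsilon$. Because convolution with the non-negative kernel $\chi_\epsilon$ preserves the distributional inequality $\tilde f_j-\Delta f_j\ge 0$, one obtains $\Delta f_{j,\epsilon}\le \tilde f_{j,\epsilon}$ pointwise on $B(0,1-\epsilon)$, so the smooth case applied to $G_\epsilon=(f_{1,\epsilon},\dots,f_{r,\epsilon})$ yields the pointwise inequality $\Delta\log|b_{1,\epsilon}|^2\le (\tilde G_\epsilon,b_{2,\epsilon}/|b_{1,\epsilon}|^2)$ on $B(0,1-\epsilon)$.

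Finally, I would pass to the distributional limit $\epsilon\to 0$ against test functions $\phi\in C^\infty_c(B(0,1))$. Two facts drive the convergence. First, the map $(x_1,\dots,x_r)\mapsto \log\sum_j e^{x_j}$ is Lipschitz---its gradient lies in the probability simplex---so $\log|b_{1,\epsilon}|^2\to \log|b_1|^2$ in $L^1_{\mathrm{loc}}$ follows from $L^1_{\mathrm{loc}}$-convergence of the $f_{j,\epsilon}$. Second, after extracting a subsequence along which $f_{j,\epsilon}\to f_j$ a.e., the weights $p_{j,\epsilon}\coloneqq e^{f_{j,\epsilon}}/\sum_k e^{f_{k,\epsilon}}$ converge a.e.\ to $p_j$ while staying in $[0,1]$; combined with $L^1_{\mathrm{loc}}$-convergence $\tilde f_{j,\epsilon}\to \tilde f_j$, Pratt's generalized dominated convergence theorem gives $L^1_{\mathrm{loc}}$-convergence of $(\tilde G_\epsilon,b_{2,\epsilon}/|b_{1,\epsilon}|^2)$ to $(\tilde G,b_2/|b_1|^2)$. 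The main obstacle is precisely this limiting step: individually, the functions $e^{f_j}$ need not even be locally integrable, and the argument goes through only because the two quantities appearing in the final inequality---$\log|b_1|^2$ and the weights $e^{f_j}/|b_1|^2$---have much better behavior than the summands of $b_2$ in isolation.
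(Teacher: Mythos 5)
Your proof is correct and follows essentially the same route as the paper: mollify, apply the smooth-case inequality (which the paper cites from the proof of Theorem 2 of \cite{Miy2} rather than rederiving via Cauchy--Schwarz as you do), and pass to the distributional limit against test functions. Your handling of the limit, via the Lipschitz bound for $(x_1,\dots,x_r)\mapsto\log\sum_j e^{x_j}$ and Pratt's generalized dominated convergence theorem, is if anything slightly more careful than the paper's direct appeal to Lebesgue's dominated convergence theorem.
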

\begin{rem} The right hand side of inequality (\ref{b2}) is locally integrable since $b_2/|b_1|^2$ is a bounded function. 
\end{rem}
\begin{rem}
As we remarked in Remark \ref{very important remark}, $\log|b_1|^2$ is locally integrable, and thus the left-hand side of inequality (\ref{b2}) is well-defined as a distribution.
\end{rem}
\begin{proof}[Proof of Lemma\ref{lemma1}] 
Let $\chi:\C\rightarrow \R$ be a mollifier and let $(\chi_\epsilon)_{\epsilon>0}$ denote the associated family of functions (see Definition \ref{mollifier} above). We define $G_\epsilon=(f_{1,\epsilon},\dots,f_{r,\epsilon})$ and $\tilde{G}_\epsilon=(\tilde{f}_{1,\epsilon},\dots,\tilde{f}_{r,\epsilon})$ as the convolution 
\begin{align*}
&G_\epsilon\coloneqq G\ast\chi_\epsilon=(f_1,\ast\chi_\epsilon,\dots, f_r\ast\chi_\epsilon), \\
&\tilde{G}_\epsilon\coloneqq \tilde{G}\ast\chi_\epsilon=(\tilde{f}_1,\ast\chi_\epsilon,\dots, \tilde{f}_r\ast\chi_\epsilon).
\end{align*}
These are defined on the open ball $B(0,1-\epsilon)\coloneqq\{z\in\C\mid|z|<1-\epsilon\}$ of $B(0,1)$ (see \cite[Definition 2.7.1]{Ran1}). 
We set $b_{1,\epsilon}$ and $b_{2,\epsilon}$ as follows:
\begin{align*}
&b_{1,\epsilon}\coloneqq \sum_{j=1}^re^{f_{j,\epsilon}/2}u_j, \\
&b_{2,\epsilon}\coloneqq \sum_{j=1}^re^{f_{j,\epsilon}}u_j. 
\end{align*}
From \cite[Proof of Theorem 2]{Miy2}, the following inequality holds for $G_\epsilon$, $b_{1,\epsilon}$ and $b_{2,\epsilon}$:
\begin{align}
\Delta\log|b_{1,\epsilon}|^2\leq \left(\Delta G_\epsilon,b_{2,\epsilon}/|b_{1,\epsilon}|^2\right).\label{b3}
\end{align}
Therefore, inequality (\ref{b2}) holds for $\tilde{G}_\epsilon$, $b_{1,\epsilon}$, and $b_{2,\epsilon}$:
\begin{align}
\Delta\log|b_{1,\epsilon}|^2\leq \left(\tilde{G}_\epsilon,b_{2,\epsilon}/|b_{1,\epsilon}|^2\right). \label{b4}
\end{align}
We then show that as $\epsilon\rightarrow 0$, $\Delta\log|b_{1,\epsilon}|^2$ and $\left(\tilde{G}_\epsilon,b_{2,\epsilon}/|b_{1,\epsilon}|^2\right)$ converge weakly to $\Delta \log|b_1|^2$ and $\left(\tilde{G},b_2/|b_1|^2\right)$ in the sense of distributions, respectively. Let $\phi:B(0,1)\rightarrow \R$ be a smooth function with a compact support. Due to the property of the mollifier, as $\epsilon\to0$, $G_\epsilon$ (resp. $\tilde{G}_\epsilon$) converges strongly to $G$ (resp. $\tilde{G}$) in the $L^1$-topology on each compact subset of $B(0,1)$. Specifically, this means $G_\epsilon$ (resp. $\tilde{G}_\epsilon$) converges to $G$ (resp. $\tilde{G}$) almost everywhere on each compact subset. Thus, by the Lebesgue's dominated convergence theorem, we find that
\begin{align*}
\int_{B(0,1)}\log|b_{1,\epsilon}|^2\Delta\phi\rightarrow \int_{B(0,1)}\log|b_1|^2\Delta\phi
\end{align*}
and 
\begin{align*}
\int_{B(0,1)}\left(\tilde{G}_\epsilon,b_{2,\epsilon}/|b_{1,\epsilon}|^2\right)\phi\rightarrow\int_{B(0,1)}\left(\tilde{G},b_2/|b_1|^2\right)\phi
\end{align*}
as $\epsilon\to0$, respectively. This establishes the desired claim.
\end{proof}

\begin{proof}[Proof of Proposition \ref{prop2}] It is enough to consider the case where $X$ is the open ball $B(0,1)$. Also, we can assume that the K\"ahler metric is the Euclidean metric. We define $\R^r$-valued functions $G$ and $\tilde{G}$ as follows:
\begin{align*}
&G\coloneqq \xi-\xi^\prime, \\
&\tilde{G}\coloneqq \tilde{\xi}-\tilde{\xi}^\prime. 
\end{align*}
Then by applying Lemma \ref{lemma1} to the above $G$ and $\tilde{G}$, and by combining the calculation in \cite[Proof of Theorem 2]{Miy2}, we have the desired inequality.
\end{proof}
\begin{proof}[Proof of Proposition \ref{prop3}] We set $I(\xi):X\rightarrow V$ as follows:
\begin{align*}
I(\xi)\coloneqq -\sum_{j=1}^r4k_je^{(v_j,\xi)}v_j-2\inum\Lambda_{\omega_X}F_h.
\end{align*}
We define $\R^r$-valued functions $G$ and $\tilde{G}$ as follows:
\begin{align*}
&G\coloneqq ((\xi,v_1)+\log(4k_1^\prime),\dots, (\xi,v_r)+\log(4k_r^\prime)), \\
&\tilde{G}\coloneqq ((I(\xi),v_1)+2\inum\Lambda_{\omega_X}F_{H_1}, \dots, (I(\xi),v_r)+2\inum\Lambda_{\omega_X}F_{H_r}).
\end{align*}
Then it can be verified that the following holds in the sense of the distribution:
\begin{align}
\Delta_{\omega_X}((\xi,v_j)+\log(4k_j^\prime))\leq (I(\xi),v_j)+2\inum\Lambda_{\omega_X}F_{H_j} \ \text{for $j=1,\dots, r$.} \label{important inequality}
\end{align}
As in the proof of Proposition \ref{prop3}, it can be supposed that $X$ is the open ball $B(0,1)$, and the K\"ahler metric is the Euclidean metric. From (\ref{important inequality}), we can apply Lemma \ref{lemma1} to the above $G$ and $\tilde{G}$. Then by the same calculation as in \cite[Proof of Theorem 3]{Miy2}, we have the desired inequality.
\end{proof}

\section{Proof of Theorem \ref{main theorem 1}}\label{section 4}
In order to prove the existence of a solution to equation (\ref{HEeq0}), we adopt the method of using the Schauder fixed point theorem (cf. \cite[Section 5.4.4]{GZ1} and \cite[p.143, Theorem 5.28]{Rud1}). Let $L^\infty(Y,V)$ denote the set of all $V$-valued $L^\infty$-functions over $Y$. To avoid potential misunderstandings, we use $L^\infty(Y,V)$ to denote the set of all $V$-valued $L^\infty$-functions, rather than as the set of equivalence classes. We first prove the following lemma:
\begin{lemm}\label{important lemma} There exist $\xi^{(-)}=(\xi^{(-)}_1,\dots, \xi^{(-)}_r),\ \xi^{(+)}=(\xi^{(+)}_1,\dots, \xi^{(+)}_r)\in L^\infty(Y,V)$ such that 
\begin{enumerate}[(i)]
\item \label{i} For each $j=1,\dots, r-1$, $\xi^{(-)}_j-\xi^{(+)}_j$ is a subharmonic function.
\item \label{ii} For each $j=1,\dots, r-1$, $\xi_j^{(-)}$ and $\xi_j^{(+)}$ are quasi-subharmonic functions.
\item \label{iii}The following holds in the sense of the distribution:
\begin{align}
&\Delta_{\omega_X}\xi_1^{(-)}\leq -4k_r^\prime e^{\xi^{(+)}_1-\xi_{r}^{(+)}}-2\inum\Lambda_{\omega_X}F_{h_1}, \label{iv0} \\
&\Delta_{\omega_X}\xi_j^{(-)}\leq -4k_{j-1}^\prime e^{\xi^{(+)}_j-\xi_{j-1}^{(-)}}-2\inum\Lambda_{\omega_X}F_{h_j} \ \text{for $j=2,\dots, r-1$},\label{iv1} \\
&\Delta_{\omega_X}\xi_j^{(+)}\geq 4k_j^\prime e^{\xi_{j+1}^{(+)}-\xi_j^{(-)}}-2\inum\Lambda_{\omega_X}F_{h_j}\ \text{for $j=1,\dots, r-1$}. \label{iv2} 
\end{align}
\item \label{iv}It holds that $\xi^{(-)}_j\leq\xi^{(+)}_j$ for all $j=1,\dots, r-1$.
\item \label{v} $\xi^{(-)}$ and $\xi^{(+)}$ satisfy the following boundary conditions:
\begin{align*}
&\lim_{z\to\zeta}\xi^{(-)}(z) =\ \eta(\zeta) \ \text{for all $\zeta\in\partial Y$}, \\
&\lim_{z\to\zeta}\xi^{(+)}(z) =\ \eta(\zeta) \ \text{for all $\zeta\in\partial Y$}.
\end{align*}
\end{enumerate}
\end{lemm}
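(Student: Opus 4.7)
The plan is to construct $\xi^{(+)}$ and $\xi^{(-)}$ as explicit perturbations of the $V$-valued harmonic extension of the boundary data $\eta$. For $j=1,\dots,r-1$, let $H_j\in C^\infty(Y)\cap C^0(\overline Y)$ solve $\Delta_{\omega_X}H_j=0$ with $H_j|_{\partial Y}=\eta_j$, and set $H_r:=-\sum_{j=1}^{r-1}H_j$, so that $H=(H_1,\dots,H_r)$ is $V$-valued. Using the strictly (classically) subharmonic exhaustion $f_X$, put $\rho:=c-f_X$; then $\rho$ is smooth on $\overline Y$ with $\rho|_{\partial Y}=0$, $\rho>0$ on $Y$, and $\Delta_{\omega_X}\rho=-\Delta_{\omega_X}f_X\geq\delta>0$ on $\overline Y$ for some $\delta>0$.

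The ansatz is $\xi^{(+)}_j=H_j+b_j\rho$ and $\xi^{(-)}_j=H_j-c_j\rho$ for $j=1,\dots,r-1$ with positive constants $b_j,c_j$ to be determined; the $r$-th components are then forced by the trace-zero constraint. Conditions $(\ref{i})$, $(\ref{ii})$, $(\ref{iv})$, $(\ref{v})$ hold essentially by inspection: $(\ref{i})$ because $\xi^{(-)}_j-\xi^{(+)}_j=-(b_j+c_j)\rho$ and $-\rho$ is classically subharmonic; $(\ref{ii})$ because the construction is smooth; $(\ref{iv})$ because $\rho\geq 0$ and $b_j,c_j\geq 0$; and $(\ref{v})$ because $\rho|_{\partial Y}=0$.

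To verify $(\ref{iii})$, I use three observations. By Remark~\ref{locally bounded}, $k'_r=e^\varphi$ is bounded on the compact set $\overline Y$; the remaining $k'_j$ and $\Lambda_{\omega_X}F_{h_j}$ are smooth, hence uniformly bounded; and the exponents on the right-hand sides of (\ref{iv0})--(\ref{iv2}) are controlled by $\|H\|_{L^\infty}$, $\sup_{\overline Y}\rho$, and the constants. Since $\Delta_{\omega_X}\xi^{(+)}_j\geq b_j\delta$ and $\Delta_{\omega_X}\xi^{(-)}_j\leq -c_j\delta$, inequalities (\ref{iv0})--(\ref{iv2}) reduce to a coupled system of scalar inequalities in the $b_j,c_j$. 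I intend to solve the system inductively; a useful anchor is (\ref{iv2}) at $j=r-1$, whose exponent $\xi^{(+)}_r-\xi^{(-)}_{r-1}=(H_r-H_{r-1})-\bigl(\sum_{i=1}^{r-1}b_i-c_{r-1}\bigr)\rho$ is bounded above by $H_r-H_{r-1}$ as soon as one arranges $\sum_{i=1}^{r-1}b_i\geq c_{r-1}$, giving a bound independent of the remaining constants.

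The main obstacle is the exponential coupling in $(\ref{iii})$: the lower bound forced on $c_j$ by (\ref{iv1}) is of the form $e^{(b_j+c_{j-1})\sup\rho}$, while the corresponding bound on $b_j$ from (\ref{iv2}) is of the form $e^{(b_{j+1}+c_j)\sup\rho}$. Closing the induction requires both a careful ordering of the choices of $b_j,c_j$ and, when the ratio $\delta/\sup_{\overline Y}\rho$ is unfavorable, a refinement of the ansatz---for instance, replacing the linear $\rho$ by a nonlinear function of $f_X$ concentrated near $\partial Y$, or taking the multipliers for different components of distinct sizes so that the trace-zero structure produces cancellations in the relevant exponents. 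Navigating this tension is the technical heart of the proof.
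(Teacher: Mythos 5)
There is a genuine gap, and you have correctly located it yourself: your argument establishes (\ref{i}), (\ref{ii}), (\ref{iv}), and (\ref{v}) without difficulty, but condition (\ref{iii}) — the entire content of the lemma — is not proved. With the ansatz $\xi_j^{(+)}=H_j+b_j\rho$, $\xi_j^{(-)}=H_j-c_j\rho$ and $\Delta_{\omega_X}\rho\geq\delta$, inequality (\ref{iv1}) forces $c_j\delta\gtrsim e^{(b_j+c_{j-1})\sup\rho}$ while (\ref{iv2}) forces $b_j\delta\gtrsim e^{(b_{j+1}+c_j)\sup\rho}$: each multiplier must exceed an exponential in the others, and the loop closes only when $\sup_{\overline Y}\rho$ is small relative to $\delta$ and to the fixed constants $\sup k_j'e^{\sup|H_i-H_{i'}|}$. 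Since $\rho$ vanishes on $\partial Y$ and $\Delta_{\omega_X}\rho\geq\delta$, a comparison with the torsion function shows $\sup\rho/\delta$ is bounded below by a quantity depending only on $Y$, so no rescaling of the profile and no reordering of the inductive choices rescues a constant-multiplier ansatz for a general domain. The proposed remedies (a nonlinear reparametrization of $f_X$, multipliers of "distinct sizes") are not carried out, and the first in particular still faces the same circularity because the exponents in (\ref{iii}) grow with the very quantities $b_j\rho$, $c_j\rho$ you are trying to enlarge.

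The paper's construction resolves exactly this tension, and the mechanism is worth internalizing. One takes a \emph{single} profile for all components, $\xi_j^{(-)}=\phi_j+\rho$ and $\xi_j^{(+)}=\phi_j-\rho$ with $\rho\leq 0$ subharmonic and $\rho|_{\partial Y}=0$, so that every exponent appearing in (\ref{iv0})--(\ref{iv2}) collapses to $-2\rho+O(1)$ or $-r\rho+O(1)$. Instead of choosing constants, one lets $\rho$ solve the nonlinear boundary value problem $-\Delta_{\omega_X}\rho=-fe^{-r\rho}$, $\rho|_{\partial Y}=0$, where $f\leq 0$ is an explicit $L^\infty$ function built from the $\min$ of the quantities $-4k_j'e^{\phi_{j+1}-\phi_j}$ (this is equation (\ref{KW}); existence and uniqueness are \cite[Theorem 5.24]{GZ1}). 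Because $\rho\leq 0$ gives $e^{-2\rho}\leq e^{-r\rho}$, the self-consistent nonlinearity $fe^{-r\rho}$ on the right-hand side dominates every exponential term that the ansatz produces, and all of (\ref{iii}) follows in one stroke. In short: the missing idea is to make the barrier profile itself satisfy a Kazdan--Warner-type equation whose nonlinearity matches the worst exponent, rather than to hunt for constants.
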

\begin{proof} We first note that in Lemma \ref{important lemma}, it does not matter what the initial metric $h=(h_1,\dots, h_r)$ is. If we can prove the above lemma for some initial metric $h=(h_1,\dots, h_r)$ and arbitrary $\eta$, then by appropriately transforming $\xi^{(+)}$, $\xi^{(-)}$, and $\eta$ we can easily prove the above proposition for any initial metric. Therefore, for simplicity, from the beginning, we assume that the curvature $F_h$ of the initial metric $h$ is zero. Let $\phi_1,\dots, \phi_r$ be harmonic functions over $Y$ such that for each $j=1,\dots, r$, it holds that $\lim_{z\to\zeta}\phi_j(z)=\eta_j(\zeta)$ for all $\zeta\in\partial Y$ (cf. \cite{GZ1, Ran1}). For an $L^\infty$-subharmonic function $\rho: Y\rightarrow [-\infty, \infty)$ satisfying $\lim_{z\to\zeta}\rho(\zeta)=0$ for all $\zeta\in\partial Y$, we set 
\begin{align}
&\xi_j^{(-)}=\rho+\phi_j, \label{jminus}\\
&\xi_j^{(+)}=-\rho+\phi_j, \label{jplus}
\end{align}
for $j=1,\dots, r-1$. Then we define
\begin{align}
&\xi^{(-)}=(\xi_1^{(-)},\dots, \xi_{r-1}^{(-)}, -(\xi_1^{(-)}+\cdots +\xi_{r-1}^{(-)})), \label{minus}\\ 
&\xi^{(+)}=(\xi_1^{(+)},\dots, \xi_{r-1}^{(+)}, -(\xi_1^{(+)}+\cdots +\xi_{r-1}^{(+)})). \label{plus}
\end{align}
For the above $\xi^{(-)}$ and $\xi^{(+)}$, it can easily be checked that condition (\ref{i}), (\ref{ii}), and (\ref{v}) are satisfied. We can also check that condition (\ref{iv}) is satisfied since a subharmonic function $\rho$ satisfying $\lim_{z\to\zeta}\rho(\zeta)=0$ for all $\zeta\in\partial Y$ is a non-positive function (see \cite[p.12, Corollary 1.17]{GZ1}). We shall choose such a $\rho$ appropriately so that condition (\ref{iii}) is satisfied for $\xi^{(-)}$ and $\xi^{(+)}$ defined as above. We set 
\begin{align*}
&f_1\coloneqq \min{\{-4k_{j-1}e^{\phi_{j}-\phi_{j-1}}\mid j=1,\dots,r\}}, \\
&f_2\coloneqq \min{\{-4k_je^{\phi_{j+1}-\phi_j}\mid j=1,\dots,r\}}, \\
&f\coloneqq \min\{f_1,f_2\},
\end{align*}
where in the definition of $f_1$, $k_0$ and $\phi_0$ are interpreted as $k_r$ and $\phi_r$, respectively. Let $\rho$ be the unique $L^\infty$-subharmonic function that solves the following non-linear elliptic boundary problem:
\begin{align}
&-\Delta_{\omega_X}\rho=-fe^{-r\rho} \ \text{in the sense of the distribution}, \label{KW}\\
&\lim_{z\to\zeta}\rho(\zeta)=0 \ \text{for all $\zeta\in\partial Y$}.
\end{align}
The existence and the uniqueness of the solution $\rho$ to the above boundary value problem is guaranteed by \cite[p.154, Theorem 5.24]{GZ1} with a trivial variable transformation that replaces $r\rho$ with another variable. We will now demonstrate that for such a $\rho$, $\xi^{(-)}$ and $\xi^{(+)}$ defined as in (\ref{jminus}), (\ref{jplus}), (\ref{minus}), and (\ref{plus}) satisfy (\ref{iii}) of Lemma \ref{important lemma}. 
For $\xi^{(-)}$ and $\xi^{(+)}$ defined as (\ref{jminus}), (\ref{jplus}), (\ref{minus}), and (\ref{plus}), condition (\ref{iv0}), (\ref{iv1}), and (\ref{iv2}) can be rewritten as follows:
\begin{align}
&\Delta_{\omega_X}\rho\leq -4k_re^{-r\rho+\phi_1-\phi_r}, \label{rho1} \\
&\Delta_{\omega_X}\rho\leq-4k_{j-1}e^{-2\rho+\phi_j-\phi_{j-1}} \ \text{for $j=2,\dots, r-1$}, \label{rho2} \\
&\Delta_{\omega_X}\rho\leq -4k_je^{-2\rho+\phi_{j+1}-\phi_j} \ \text{for $j=1,\dots, r-1$}. \label{rho3}
\end{align}
Since the subharmonic function $\rho$ is a non-positive function, the following conditions (\ref{rho4}) and (\ref{rho5}) are stronger than the above (\ref{rho2}) and (\ref{rho3}):
\begin{align}
&\Delta_{\omega_X}\rho\leq-4k_{j-1}e^{-r\rho+\phi_j-\phi_{j-1}} \ \text{for $j=1,\dots, r-1$}, \label{rho4} \\
&\Delta_{\omega_X}\rho\leq -4k_je^{-r\rho+\phi_{j+1}-\phi_j} \ \text{for $j=1,\dots, r-1$}. \label{rho5}
\end{align}
It is easy to see that a subharmonic function $\rho$ which is a solution to the elliptic equation (\ref{KW}) satisfies (\ref{rho1}), (\ref{rho4}) and (\ref{rho5}). Then we have the desired claim.
\end{proof}
\begin{rem}\label{announcement} From conditions (\ref{ii}) and (\ref{iii}) in Lemma \ref{important lemma}, for each $j=1,\dots, r-1$, $\xi^{(-)}_j$ is a $-F_{h_j}$-subharmonic function, and $-\xi_j^{(+)}$ is an $F_{h_j}$-subharmonic function.
\end{rem}
\begin{rem} From the proof of Lemma \ref{important lemma}, we can construct $\xi^{(-)}$ and $\xi^{(+)}$ so that the following conditions hold in addition to (\ref{i}), (\ref{ii}), (\ref{iii}), (\ref{iv}), and (\ref{v}) in Lemma \ref{lemma1}:
\begin{align}
\Delta_{\omega_X}\xi_j^{(-)}\geq -C, \ \Delta_{\omega_X}\xi_j^{(+)} \leq C\ \text{for $j=1,\dots, r-1$} \label{iv3}
\end{align}
in the sense of the distribution, with $C$ being some positive constant. Indeed, for the $\xi^{(-)}$ and $\xi^{(+)}$ constructed in the proof of Lemma \ref{important lemma}, the following inequality holds for each $j=1,\dots, r-1$:
\begin{align*}
\Delta_{\omega_X}\xi_j^{(-)}&=\Delta_{\omega_X}\rho =fe^{-r\rho} \geq f, \\
\Delta_{\omega_X}\xi_j^{(+)}&=-\Delta_{\omega_X}\rho= -fe^{-r\rho}\leq -f.
\end{align*}
Since $f$ is an $L^\infty$-function, we have (\ref{iv3}) for the $\xi^{(-)}$ and $\xi^{(+)}$. If we impose condition (\ref{iv3}) to $\xi^{(-)}$ and $\xi^{(+)}$, then we can show that the sequence $(\xi_{(k)}^\prime)$ in Lemma \ref{L1} converges to a $\xi^\prime$ in capacity, as we will further discuss in Remark \ref{capacity}.
\end{rem}
We fix $\xi^{(-)}, \xi^{(+)}\in L^\infty(Y,V)$ satisfying the conditions of Lemma \ref{important lemma}. For a $V$-valued function $\xi=(\xi_1,\dots, \xi_r):Y\rightarrow V$, we consider the following condition:
\begin{enumerate}[($\ast$)]
\item For each $j=1,\dots, r-1$, $\xi_j^{(-)}-\xi_j$ and $\xi_j-\xi_j^{(+)}$ are subharmonic functions.
\end{enumerate}
We introduce the following notation:
\begin{defi} We denote by $SH(Y, \xi^{(-)},\xi^{(+)})$ the set of all $V$-valued functions that satisfy the above condition ($\ast$): 
\begin{align*}
SH(Y,\xi^{(-)},\xi^{(+)})\coloneqq \{\xi:Y\rightarrow V\mid \text{$\xi$ satisfies condition $(\ast)$}\}.
\end{align*}
\end{defi}
\begin{defi} For two vectors $v=(v_1,\dots, v_r),v^\prime=(v_1^\prime,\dots, v_r^\prime)\in V$, we denote by $v\leq v^\prime$ if $v_j\leq v_j^\prime$ holds for all $j=1,\dots, r-1$.
\end{defi}
\begin{defi} We define a set of $V$-valued functions $\CC$ as follows:
\begin{align*}
\CC\coloneqq \{\xi\in SH(Y,\xi^{(-)},\xi^{(+)})\cap L^\infty(Y,V) \mid \xi^{(-)}\leq\xi\leq\xi^{(+)}\}.
\end{align*}
\end{defi}
Clearly, $\xi^{(-)}$ and $\xi^{(+)}$ are contained in $\CC$. Then we prove the following:
\begin{lemm}\label{compact}{\it $\CC$ is a compact set for the $L^1$-topology, where the $L^1$-norm is with respect to the K\"ahler metric $g_X\left.\right|_Y$.
}
\end{lemm}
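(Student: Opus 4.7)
The plan is to establish sequential compactness of $\CC$ in the $L^1(Y)$-topology by combining a uniform $L^\infty$-bound with the classical compactness theorem for uniformly bounded families of subharmonic functions. First, I would observe that for any $\xi \in \CC$ and each $j = 1, \dots, r-1$, the pointwise sandwich $\xi_j^{(-)} \leq \xi_j \leq \xi_j^{(+)}$ yields $\|\xi_j\|_{L^\infty(Y)} \leq \max(\|\xi_j^{(-)}\|_{L^\infty}, \|\xi_j^{(+)}\|_{L^\infty})$, and the constraint $\xi_1 + \cdots + \xi_r = 0$ then bounds $\xi_r$ as well. Since $\overline{Y} = f_X^{-1}((-\infty, c])$ is compact in $X$, $Y$ has finite $\omega_X$-mass, and the uniform $L^\infty$-bound implies a uniform $L^1(Y)$-bound on $\CC$.

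Next, I would take an arbitrary sequence $(\xi^{(k)})_k \subseteq \CC$ and extract an $L^1(Y)$-convergent subsequence. For each $j = 1, \dots, r-1$, the functions $u_j^{(k)} \coloneqq \xi_j^{(k)} - \xi_j^{(+)}$ are non-positive subharmonic functions on $Y$, uniformly bounded below by $\xi_j^{(-)} - \xi_j^{(+)} \in L^\infty(Y)$. By the classical compactness theorem for uniformly bounded sequences of subharmonic functions (cf. \cite{GZ1, Ran1}), a subsequence converges in $L^1_{\text{loc}}(Y)$ to a subharmonic function $u_j^\infty$. A diagonal extraction over $j = 1, \dots, r-1$ gives a single subsequence along which each component converges in $L^1_{\text{loc}}(Y)$. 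Since the sequence is uniformly $L^\infty$-bounded and $Y$ has finite measure, the dominated convergence theorem upgrades this to $L^1(Y)$-convergence. I would then define the candidate limit $\xi^\infty$ by $\xi_j^\infty \coloneqq u_j^\infty + \xi_j^{(+)}$ for $j < r$ and $\xi_r^\infty \coloneqq -\sum_{j=1}^{r-1} \xi_j^\infty$.

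To finish, I would verify that $\xi^\infty \in \CC$. The sandwich $\xi^{(-)} \leq \xi^\infty \leq \xi^{(+)}$ passes to the almost-everywhere limit; subharmonicity of $\xi_j^\infty - \xi_j^{(+)} = u_j^\infty$ is built in by construction; and subharmonicity of $\xi_j^{(-)} - \xi_j^\infty$ follows from the subharmonicity of each $\xi_j^{(-)} - \xi_j^{(k)}$ together with the $L^1_{\text{loc}}$-closedness of uniformly bounded families of subharmonic functions. The main subtle point I anticipate is the interplay between the pointwise-defined notion of subharmonic function and the $L^1$-equivalence classes that arise in the limit: the standard remedy is to always pass to the upper semicontinuous regularization as the canonical subharmonic representative and to verify that the pointwise inequalities $\xi^{(-)} \leq \xi^\infty \leq \xi^{(+)}$ remain intact for these representatives.
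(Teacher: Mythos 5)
Your proposal is correct and follows essentially the same route as the paper: both extract $L^1_{\mathrm{loc}}$-convergent subsequences via the classical compactness theorem for uniformly bounded families of subharmonic functions applied to the differences with $\xi^{(+)}$ (the paper also extracts along $\xi^{(-)}_j-\xi_{(k),j}$ and identifies the two limits, which plays the role of your closedness-plus-regularization step), and both upgrade $L^1_{\mathrm{loc}}$ to $L^1(Y)$ using the uniform bound $\xi^{(-)}\leq\xi_{(k)}\leq\xi^{(+)}$ together with the finiteness of the volume of $Y$. Your explicit attention to the upper semicontinuous representative is a welcome clarification of a point the paper handles more tersely.
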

\begin{proof} Let $(\xi_{(k)})_{k\in\N}\in\CC^\N$ be a sequence. We denote by $\xi_{(k),j}$ the $j$-th component of $\xi_{(k)}$ for each $k\in\N$. From \cite[Theorem 1.46]{GZ1}, by extracting and relabeling, we can assume that $(\xi_j^{(-)}-\xi_{(k),j})_{k\in\N}$ converges to a subharmonic function $U_j$ in the $L^1_{loc}$-topology for all $j=1,\dots, r-1$. We set $\xi_j\coloneqq \xi_j^{(-)}-U_j$. We can also assume, by extracting and relabeling, that $(\xi_{(k),j}-\xi_j^{(+)})_{k\in\N}$ converges to a subharmonic function $U_j^\prime$ in the $L^1_{loc}$-topology for all $j=1,\dots, r-1$. We set $\xi_j^\prime\coloneqq U_j^\prime-\xi_j^{(+)}$. From the uniqueness of the convergence point, $\xi_j$ and $\xi_j^\prime$ coincide almost everywhere. We see that they coincide everywhere since the right-hand side of $\xi^{(-)}_j-\xi^{(+)}_j=\xi^{(-)}_j-\xi_{(k),j}+\xi_{(k),j}-\xi^{(+)}_j$ converges to $\xi_j^{(-)}-\xi_j+\xi_j^\prime-\xi^{(+)}_j$ as $k\to\infty$. We set $\xi\coloneqq (\xi_1,\dots, \xi_{r-1}, -(\xi_1+\cdots+\xi_{r-1}))$. Obviously, $\xi\in\CC$. Therefore all that remains is to prove $(\xi_{(k)})_{k\in\N}$ converges to $\xi$ in the $L^1$-topology. This can be done by the following observation for any compact subset $K$:
\begin{align*}
\int_Y|\xi-\xi_{(k)}|=\int_K |\xi-\xi_{(k)}|+\int_{Y\backslash K}|\xi-\xi_{(k)}|.
\end{align*}
The second term of the right-hand side can be arbitrarily small without depending on $k$, by only changing the selection of $K$ since there exists a compact exhaustion of $Y$ and $\xi_{(k)}$ and $\xi$ are bounded by $\xi^{(-)}$ and $\xi^{(+)}$.
\end{proof}

\begin{lemm}\label{xiprime}{\it For each $\xi\in\CC$, there uniquely exists a $\xi^\prime\in\CC$ that satisfies the following equation in the sense of the distribution:
\begin{align}
\Delta_{\omega_X}\xi^\prime+\sum_{j=1}^r4k_j^\prime e^{(v_j,\xi)}v_j=-2\inum\Lambda_{\omega_X}F_h. \label{linear}
\end{align}
Moreover, the unique solution $\xi^\prime$ to equation (\ref{linear}) is a $C^{1,\alpha}$-function for any $\alpha\in(0,1)$.
}
\end{lemm}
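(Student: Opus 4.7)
The plan is to solve (\ref{linear}) as a linear elliptic Dirichlet problem componentwise by standard Poisson theory, and then verify membership in $\CC$ by a maximum-principle comparison against the barriers $\xi^{(-)}$, $\xi^{(+)}$ supplied by Lemma \ref{important lemma}. Unpacking (\ref{linear}) coordinatewise with $v_j=u_{j+1}-u_j$ and the cyclic convention $k^\prime_0=k^\prime_r$, $v_0=v_r$, yields
\begin{align*}
\Delta_{\omega_X}\xi^\prime_k\;=\;4k^\prime_k e^{(v_k,\xi)}-4k^\prime_{k-1}e^{(v_{k-1},\xi)}-2\inum\Lambda_{\omega_X}F_{h_k}\eqqcolon g_k.
\end{align*}
Since $\xi\in\CC$ is $L^\infty$-bounded, each $k^\prime_j$ is bounded on the compact set $\overline{Y}$ by Remark \ref{locally bounded}, and $F_h$ is smooth, every $g_k$ lies in $L^\infty(Y)$; moreover $\sum_k g_k\equiv 0$ because $\sum_k v_k=0$ and the flatness of $\det h$ gives $\sum_k F_{h_k}=0$.

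For each $j=1,\dots,r-1$ I would solve the scalar Dirichlet problem $\Delta_{\omega_X}\xi^\prime_j=g_j$ with boundary data $\eta_j$ by Perron's method on the smooth bounded domain $Y$. The $L^\infty$ right-hand side gives $\xi^\prime_j\in W^{2,p}_{\rm loc}$ for every $p<\infty$ via Calder\'on--Zygmund, hence $\xi^\prime_j\in C^{1,\alpha}_{\rm loc}$ for every $\alpha\in(0,1)$ by Sobolev embedding, with continuity up to the smooth boundary $\partial Y=f_X^{-1}(c)$ by standard boundary regularity. Setting $\xi^\prime_r\coloneqq -\sum_{j=1}^{r-1}\xi^\prime_j$, the conditions $\sum_k g_k=0$ and $\sum_k\eta_k=0$ force $\xi^\prime\in V$ and make it a $C^{1,\alpha}$-solution of the full vector equation (\ref{linear}). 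Uniqueness is immediate from linearity: the difference of two such solutions is a harmonic $V$-valued function with zero boundary data, hence zero.

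To verify $\xi^\prime\in\CC$ it suffices, for each $j=1,\dots,r-1$, to establish the distributional inequalities $\Delta_{\omega_X}\xi^{(-)}_j\le\Delta_{\omega_X}\xi^\prime_j\le\Delta_{\omega_X}\xi^{(+)}_j$, which (because boundary values coincide) simultaneously deliver both $\xi^{(-)}_j\le\xi^\prime_j\le\xi^{(+)}_j$ and the subharmonicity of $\xi^{(-)}_j-\xi^\prime_j$ and $\xi^\prime_j-\xi^{(+)}_j$ required by condition $(\ast)$. The lower comparison at $j=1$ uses (\ref{iv0}) together with $\xi_1\le\xi^{(+)}_1$ and $\xi_r\ge\xi^{(+)}_r$ (the latter inherited from $\sum_k\xi_k=0$ and $\xi_k\le\xi^{(+)}_k$ for $k\le r-1$); for $2\le j\le r-1$, use (\ref{iv1}) with $\xi_j\le\xi^{(+)}_j$ and $\xi_{j-1}\ge\xi^{(-)}_{j-1}$. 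The upper comparison for $j\le r-2$ follows analogously from (\ref{iv2}) together with $\xi_{j+1}\le\xi^{(+)}_{j+1}$ and $\xi_j\ge\xi^{(-)}_j$.

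The delicate point is the upper comparison at $j=r-1$: a direct appeal to (\ref{iv2}) would require $\xi_r-\xi_{r-1}\le\xi^{(+)}_r-\xi^{(-)}_{r-1}$, which fails in general, since the constraint $\sum_k\xi_k=0$ only forces $\xi_r\in[\xi^{(+)}_r,\xi^{(-)}_r]$ and allows $\xi_r$ to reach up to $\xi^{(-)}_r\ge\xi^{(+)}_r$. To close this gap I would invoke the stronger bound
\begin{align*}
\Delta_{\omega_X}\xi^{(+)}_{r-1}\;\ge\; 4k^\prime_{r-1}\,e^{\xi^{(-)}_r-\xi^{(-)}_{r-1}}-2\inum\Lambda_{\omega_X}F_{h_{r-1}}
\end{align*}
actually established inside the proof of Lemma \ref{important lemma} via inequality (\ref{rho5}); indeed, with the explicit choices there the exponent $\xi^{(-)}_r-\xi^{(-)}_{r-1}$ equals $-r\rho+\phi_r-\phi_{r-1}$. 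Combined with $\xi_r\le\xi^{(-)}_r$ and $\xi_{r-1}\ge\xi^{(-)}_{r-1}$, this yields $\Delta_{\omega_X}\xi^\prime_{r-1}\le\Delta_{\omega_X}\xi^{(+)}_{r-1}$ and completes the verification $\xi^\prime\in\CC$.
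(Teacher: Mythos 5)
Your proof is correct and follows the same overall strategy as the paper's: freeze $\xi$ in the exponential terms, solve the resulting linear Dirichlet problem componentwise, and verify membership in $\CC$ by comparing against the barriers of Lemma \ref{important lemma} with the maximum principle. Two differences are worth recording. First, for the linear step the paper does not solve $\Delta_{\omega_X}\xi_j^\prime=g_j$ in one stroke; it splits $\xi_j^\prime=\xi_{j,+}^\prime+\xi_{j,-}^\prime$ into two pieces whose right-hand sides have a definite sign, so that $\mp\xi_{j,\pm}^\prime$ is subharmonic and \cite[p.150, Theorem 5.17]{GZ1} gives existence and uniqueness, and then reaches $C^{1,\alpha}$ through $W^{1,2}_{loc}$ via \cite{Ag1}, \cite{FF1} and \cite{FO1}; your direct Perron-plus-Calder\'on--Zygmund route reaches the same conclusion (one small imprecision: $c$ need not be a regular value of $f_X$, so $\partial Y$ need not be a smooth hypersurface --- what you actually need is that every boundary point is regular for the Dirichlet problem, which holds because $c-f_X$ is a superharmonic barrier). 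Second, and more substantively, your handling of the upper comparison at $j=r-1$ is more careful than the paper's: the paper applies (\ref{iv2}) uniformly for $j=1,\dots,r-1$ and concludes $e^{\xi_{j+1}^{(+)}-\xi_j^{(-)}}\geq e^{\xi_{j+1}-\xi_j}$, but at $j=r-1$ this would require $\xi_r\leq\xi_r^{(+)}$, whereas $\xi\in\CC$ only constrains the first $r-1$ components and hence only gives $\xi_r^{(+)}\leq\xi_r\leq\xi_r^{(-)}$. You correctly note that the explicit barriers built from (\ref{KW}) satisfy the stronger inequality with $\xi_r^{(-)}$ in the exponent --- precisely (\ref{rho5}) at $j=r-1$, since $\xi_r^{(-)}-\xi_{r-1}^{(-)}=-r\rho+\phi_r-\phi_{r-1}$ --- which closes the gap. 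So your proposal is not merely an alternative write-up; it repairs a step that the paper's proof passes over without comment.
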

\begin{proof} 
We first construct a weak solution $\xi^\prime$ to equation (\ref{linear}). Similar to the proof of Lemma \ref{important lemma}, we assume that $F_h=0$ for simplicity. For each $j=1,\dots,r-1$, let $\xi_{j,+}^{\prime}$ be the unique $L^\infty$-weak solution of the following elliptic boundary value problem:
\begin{align*}
&\Delta_{\omega_X}\xi_{j,+}^{\prime}-4k_j^\prime e^{(v_j,\xi)}=0 \ \text{in the sense of the distribution}, \\
&-\xi_{j,+}^{\prime}\in SH(Y), \\ 
&\lim_{z\to\zeta}\xi_{j,+}^{\prime}(z)=\frac{1}{2}\eta_j(\zeta)\ \text{for all $\zeta\in\partial Y$},
\end{align*} 
where we denote by $SH(Y)$ the set of all subharmonic functions over $Y$. The existence and uniqueness of the $L^\infty$-solution to the above boundary value problem are guaranteed by \cite[p.150, Theorem 5.17]{GZ1}. We also denote by $\xi^\prime_{j,-}$ the unique $L^\infty$-solution of the following:
\begin{align*}
&\Delta_{\omega_X}\xi_{j,-}^{\prime}+4k_{j-1}^\prime e^{(v_{j-1},\xi)}=0 \ \text{in the sense of the distribution}, \\
&\xi_{j,-}^{\prime}\in SH(Y), \\ 
&\lim_{z\to\zeta}\xi_{j,-}^{\prime}(z)=\frac{1}{2}\eta_j(\zeta)\ \text{for all $\zeta\in\partial Y$}.
\end{align*}
We set $\xi^\prime_j\coloneqq \xi^\prime_{j,+}+\xi^\prime_{j,-}$ for each $j=1,\dots, r-1$. Then we define $\xi^\prime$ as $\xi^\prime\coloneqq (\xi^\prime_1,\dots,\xi^\prime_{r-1}, -(\xi_1^\prime+\cdots+\xi_{r-1}^\prime))$. From the construction, clearly, $\xi^\prime$ satisfies the elliptic equation (\ref{linear}) in the weak sense. From \cite[Theorem 6.2]{Ag1} (see also \cite[Theorem 1]{FF1}), it can be observed that $\xi_j^\prime \in W^{1,2}_{loc}(Y)$ for each $j=1,\dots, r$ (see \cite[Chapter 1]{Ag1} for the definition of the local Sobolev space). Therefore by \cite[Proposition 2.18]{FO1}, it follows that $\xi^\prime$ is a $V$-valued $C^{1,\alpha}$-function for any $\alpha\in (0,1)$. We next prove that $\xi^\prime$ is contained in $\CC$. We show that for each $j=1,\dots, r-1$, $\xi_j^{(-)}-\xi_j^\prime$ is a subharmonic function. From (\ref{iv0}), the following holds in the sense of the distribution:
\begin{align*}
\Delta_{\omega_X}(\xi_1^{(-)}-\xi_1^\prime)\leq& -4k_r^\prime e^{\xi_1^{(+)}-\xi_r^{(+)}} -2\Lambda_{\omega_X}F_{h_1} \\
&-4k_1^\prime e^{(v_1,\xi)}+4k_r^\prime e^{(v_r,\xi)}+2\inum\Lambda_{\omega_X}F_{h_j} \\
\leq & -4k_r^\prime (e^{\xi_1^{(+)}-\xi_r^{(+)}}-e^{\xi_1-\xi_r}) \\
\leq &\ 0,
\end{align*}
where the final inequality follows from the assumption $\xi_1\leq\xi_1^{(+)}$ and the fact $\xi_r^{(+)}=-(\xi_1^{(+)}+\cdots +\xi_{r-1}^{(+)})\leq -(\xi_1+\cdots+\xi_{r-1})=\xi_r$ which is immediately follows from the assumption $\xi_j\leq\xi_j^{(+)}$ for all $j=1,\dots, r-1$. Similarly, from (\ref{iv1}), the following holds for each $j=2,\dots, r-1$:
\begin{align*}
\Delta_{\omega_X}(\xi_j^{(-)}-\xi_j^\prime)\leq& -4k_{j-1}^\prime e^{\xi_j^{(+)}-\xi_{j-1}^{(-)}} -2\Lambda_{\omega_X}F_{h_j} \\
&-4k_j^\prime e^{(v_j,\xi)}+4k_{j-1}^\prime e^{(v_{j-1},\xi)}+2\inum\Lambda_{\omega_X}F_{h_j} \\
\leq & -4k_{j-1}^\prime (e^{\xi_j^{(+)}-\xi_{j-1}^{(-)}}-e^{\xi_j-\xi_{j-1}}) \\
\leq& \ 0.
\end{align*}
Since $\xi^\prime$ is a $V$-valued $C^{1,\alpha}$-function, $\xi_j^{(-)}-\xi_j^\prime$ is an upper semicontinuous function for each $j=1,\dots,r-1$. Therefore, combining the above inequality, it concludes that $\xi_j^{(-)}-\xi_j^\prime$ is a subharmonic function for each $j=1,\dots, r-1$. Then by applying the maximum principle \cite[Corollary 1.16]{GZ1} to $\xi_j^{(-)}-\xi_j^\prime$, we have $\xi_j^{(-)}- \xi_j^\prime\leq 0$ for each $j=1,\dots, r-1$. Therefore, in order to show that $\xi^\prime$ is contained in $\CC$, all that remains is to prove for each $j=1,\dots, r-1$, $\xi_j^\prime - \xi_j^{(+)}$ is a subharmonic function satisfying $\xi_j^\prime - \xi_j^{(+)}\leq 0$. From (\ref{iv2}), the following holds in the sense of the distribution:
\begin{align*}
\Delta_{\omega_X}(\xi_j^\prime-\xi_j^{(+)}) \leq & \ 4k_j^\prime e^{(v_j,\xi)}-4k_{j-1}^\prime e^{(v_{j-1},\xi)}-2\inum\Lambda_{\omega_X}F_{h_j} \\
& -4k_j^\prime e^{\xi_{j+1}^{(+)}-\xi_j^{(-)}} +2\Lambda_{\omega_X}F_{h_j} \\
\leq& -4k_j^\prime (e^{\xi_{j+1}^{(+)}-\xi_j^{(-)}}-e^{\xi_{j+1}-\xi_j}) \\
\leq& 0.
\end{align*}
Then by the same argument as above, it concludes that $\xi_j-\xi^{(+)}_j$ is a subharmonic function. Also, again, by the maximum principle, we have $\xi_j-\xi_j^{(+)}\leq 0$. Finally, we show the uniqueness of the weak solution to equation (\ref{linear}) which is contained in $\CC$. This follows from the maximum principle: Let $\xi^\prime=(\xi^\prime_1,\dots, \xi_r^\prime), \xi^{\prime\prime}=(\xi_1^{\prime\prime},\dots, \xi_r^{\prime\prime})\in\CC$ be weak solutions to equation (\ref{linear}). As we noted above, $\xi^\prime$ and $\xi^{\prime\prime}$ are $V$-valued $C^{1,\alpha}$-functions. Also, we have $\Delta_{\omega_X}(\xi^\prime-\xi^{\prime\prime})=0$. Since we have 
\begin{align}
\xi^{(-)}-\xi^{(+)}\leq \xi^\prime-\xi^{\prime\prime}\leq \xi^{(+)}-\xi^{(-)},
\end{align}
it holds that $\lim_{z\to\zeta}(\xi^\prime(z)-\xi^{\prime\prime}(z))=0$ for all $\zeta\in\partial Y$. Consequently, by the maximum principle (cf. \cite[Section 1.2.2]{GZ1}), we have $\xi_j^\prime=\xi_j^{\prime\prime}$ for all $j=1,\dots,r$. This establishes the desired result.
\end{proof}
We introduce the following notation:
\begin{defi}\label{S} We denote by $S:\CC\rightarrow \CC$ the map mapping a $\xi\in\CC$ to the unique $\xi^\prime\eqqcolon S(\xi)$ in Lemma \ref{xiprime}. 
\end{defi}
The following holds:
\begin{lemm}\label{L1}{\it The map $S$ defined in Definition \ref{S} is a continuous map in the $L^1$-topology.
}
\end{lemm}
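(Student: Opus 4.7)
The plan is to prove continuity of $S$ by a standard subsequence/compactness argument, together with a distributional limit in equation (\ref{linear}). Suppose $(\xi_{(k)})_{k\in\N}\subseteq\CC$ converges to some $\xi_\infty\in\CC$ in $L^1(Y,V)$, and set $\xi^\prime_{(k)}\coloneqq S(\xi_{(k)})\in\CC$. We want to show that $\xi^\prime_{(k)}\to S(\xi_\infty)$ in $L^1$. Since $\CC$ is $L^1$-compact by Lemma \ref{compact}, it suffices to prove that every $L^1$-subsequential limit of $(\xi^\prime_{(k)})$ coincides with $S(\xi_\infty)$; for then the full sequence must converge to $S(\xi_\infty)$ by a standard argument.

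So let $(\xi^\prime_{(k_\ell)})$ be a subsequence with $\xi^\prime_{(k_\ell)}\to\xi^\prime$ in $L^1$ for some $\xi^\prime\in\CC$. I will show that $\xi^\prime$ satisfies
\begin{align*}
\Delta_{\omega_X}\xi^\prime+\sum_{j=1}^r4k_j^\prime e^{(v_j,\xi_\infty)}v_j=-2\inum\Lambda_{\omega_X}F_h
\end{align*}
in the sense of distributions, whence Lemma \ref{xiprime} forces $\xi^\prime=S(\xi_\infty)$. Fix a test function $\phi\in C^\infty_c(Y,V)$ and test equation (\ref{linear}) for $\xi=\xi_{(k_\ell)}$, $\xi^\prime=\xi^\prime_{(k_\ell)}$ against $\phi$, integrating by parts on the Laplacian. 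Two passages to the limit are required.

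For the Laplacian term, $\int_Y(\xi^\prime_{(k_\ell)},\Delta_{\omega_X}\phi)\to\int_Y(\xi^\prime,\Delta_{\omega_X}\phi)$ because $\xi^\prime_{(k_\ell)}\to\xi^\prime$ in $L^1$ and $\Delta_{\omega_X}\phi$ is compactly supported and bounded. For the nonlinear term, recall that all elements of $\CC$ are trapped between $\xi^{(-)}$ and $\xi^{(+)}$, both $L^\infty$, so $(v_j,\xi_{(k_\ell)})$ is uniformly bounded on $Y$; consequently $e^{(v_j,\xi_{(k_\ell)})}$ is uniformly bounded. Passing to a further subsequence (which is harmless for the purposes of the argument) we may assume $\xi_{(k_\ell)}\to\xi_\infty$ almost everywhere on $Y$, and then by continuity of the exponential, $e^{(v_j,\xi_{(k_\ell)})}\to e^{(v_j,\xi_\infty)}$ almost everywhere as well. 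Since the coefficients $k_j^\prime$ are locally bounded on $Y$ (using Remark \ref{locally bounded} for $k_r^\prime=e^\varphi$), and $\supp\phi$ is compact, the Lebesgue dominated convergence theorem yields
\begin{align*}
\int_Y(4k_j^\prime e^{(v_j,\xi_{(k_\ell)})}v_j,\phi)\to\int_Y(4k_j^\prime e^{(v_j,\xi_\infty)}v_j,\phi).
\end{align*}

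Combining the two limits, $\xi^\prime$ is a weak solution of (\ref{linear}) with $\xi$ replaced by $\xi_\infty$, and it lies in $\CC$. By the uniqueness portion of Lemma \ref{xiprime} we conclude $\xi^\prime=S(\xi_\infty)$, as required. I expect the only delicate point to be the dominated-convergence step for the nonlinear term, and this is resolved precisely by the uniform bound on $\CC$ together with the local boundedness of $e^\varphi$ highlighted in Remark \ref{locally bounded}; no new estimate is needed beyond what the setup already provides.
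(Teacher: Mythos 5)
Your proposal is correct and follows essentially the same route as the paper: extract an $L^1$-convergent subsequence of $(\xi^\prime_{(k)})$ via the compactness of $\CC$ (Lemma \ref{compact}), pass to the limit in the distributional form of equation (\ref{linear}), and identify the limit with $S(\xi_\infty)$ by the uniqueness in Lemma \ref{xiprime}. You merely spell out the dominated-convergence step for the nonlinear term and the subsequential-limit bookkeeping that the paper's one-line justification leaves implicit.
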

\begin{proof} Let $(\xi_{(k)})_{k\in\N}\in\CC^\N$ be a sequence that converges to a $\xi=(\xi_1,\dots, \xi_r)\in\CC$ in the $L^1$-topology. We set $\xi_{(k)}^\prime\coloneqq S(\xi_{(k)})$ for each $k=1,2,\dots$. From Lemma \ref{compact}, by extracting and relabeling, we can assume that $(\xi_{(k)}^\prime)_{k\in\N}$ converges to a $\xi^\prime=(\xi^\prime_1,\dots,\xi^\prime_r)\in\CC$ in the $L^1$-topology. Consequently the assertion follows from the continuity of the Laplace operator with respect to the $L^1_{loc}$-topology.
\end{proof}
\begin{rem}\label{capacity} Unlike the case of the higher-dimensional Monge-Amp\`ere-operator (see \cite[pp. 154-155]{GZ1}), there is no need to show $(\xi_{(k)}^\prime)_{k\in\N}$, which has already been extracted and relabeled appropriately, converges to $\xi^\prime$ in capacity (see \cite[p.112, Definition 4.23]{GZ1}). However, if we additionally impose condition (\ref{iv3}) on $\xi^{(-)}$ and $\xi^{(+)}$, then it is not difficult to show that $(\xi_{(k)}^\prime)_{k\in\N}$ converges to $\xi^\prime$ in capacity, as shown in below: Let $\xi^\prime_{(k),j}$ be the $j$-th component of $\xi_{(k)}^\prime$ for each $j=1,\dots, r$ and each $k=1,2,\dots$. We show that for each $j=1,\dots, r-1$, $(\xi^\prime_{(k),j})_{k\in\N}$ converges to $\xi_j^\prime$ in capacity. For each Borel subset $E\subseteq Y$, we denote by $\Cap_Y(E)$ the capacity of $E$ (see \cite[p.108, Definition 4.16]{GZ1}). By using \cite[Lemma 5.18]{GZ1}, for each $\delta>0$, we have
\begin{align}
\Cap_Y(\{\xi_j^\prime-\xi^\prime_{(k),j}\geq 2\delta\})&=\Cap_Y(\{\xi^\prime_j-\xi_j^{(+)}-(\xi^\prime_{(k),j}-\xi_j^{(+)})\geq 2\delta\}) \notag \\
&\leq\delta^{-1}\int_{\{\xi_j^\prime-\xi_{(k),j}^\prime\geq \delta\}}dd^c(\xi^\prime_{(k),j}-\xi_j^{(+)}) \notag \\
&=(2\pi\delta)^{-1}\int_{\{\xi_j^\prime-\xi_{(k),j}^\prime\geq \delta\}}(-\Delta_{\omega_X})(\xi^\prime_{(k),j}-\xi_j^{(+)})\omega_X \notag \\
&\leq (2\pi\delta)^{-1}\int_{\{\xi_j^\prime-\xi_{(k),j}^\prime\geq \delta\}}(-\Delta_{\omega_X})(\xi^{(-)}_j-\xi_j^{(+)})\omega_X \notag \\
&\leq (2\pi\delta)^{-1}\int_{\{\xi_j^\prime-\xi_{(k),j}^\prime\geq \delta\}}2C\omega_X \notag \\
&\leq (2\pi\delta^2)^{-1}\int\max{\{\xi_j^\prime-\xi_{(k),j}^\prime, 0\}} \ 2C\omega_X \label{ComegaX}
\end{align}
where $C$ is a constant in (\ref{iv3}), and we have used the inequality $\Delta_{\omega_X}(\xi_j^{(-)}-\xi_{(k),j}^\prime)\leq 0$. (\ref{ComegaX}) converges to 0 as $k\to\infty$. Therefore we have $\lim_{k\to\infty}\Cap_Y(\{\xi_j^\prime-\xi^\prime_{(k),j}\geq 2\delta\})=0$. By swapping the roles of $\xi_j^\prime$ and $\xi^\prime_{(k),j}$, we can also show that $\lim_{k\to\infty}\Cap_Y(\{\xi_{(k),j}^\prime-\xi^\prime_j\geq 2\delta\})=0$. Therefore, $(\xi^\prime_{(k)})_{k\in\N}$ converges to $\xi^\prime$ in capacity. 
\end{rem}
Then we have the following:
\begin{lemm}\label{existence}{\it There exists a $V$-valued function $\xi=(\xi_1,\dots, \xi_r):Y\rightarrow V$ that satisfies (\ref{a}) and (\ref{b}) in Theorem \ref{main theorem 1}. 
}
\end{lemm}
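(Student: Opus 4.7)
The plan is to apply the Schauder fixed point theorem (as cited in \cite[p.143, Theorem 5.28]{Rud1}) to the map $S:\CC\to\CC$ introduced in Definition \ref{S}. A fixed point of $S$ is, by the construction of $S$ in Lemma \ref{xiprime}, precisely a $V$-valued function $\xi\in\CC$ that is of class $C^{1,\alpha}$ for any $\alpha\in(0,1)$ and solves equation (\ref{HEeq0}) in the sense of the distribution; this directly yields condition (\ref{a}) of Theorem \ref{main theorem 1}.

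To invoke Schauder, I would view $\CC$ as a subset of the topological vector space $L^1(Y,V)$ (with the $L^1$-topology associated with $g_X|_Y$). First I would observe that $\CC$ is non-empty since $\xi^{(-)},\xi^{(+)}\in\CC$, and is convex: for $\xi,\tilde{\xi}\in\CC$ and $t\in[0,1]$, the componentwise sandwich $\xi^{(-)}\leq t\xi+(1-t)\tilde{\xi}\leq\xi^{(+)}$ is preserved trivially, while each difference $\xi^{(-)}_j-(t\xi_j+(1-t)\tilde{\xi}_j)=t(\xi^{(-)}_j-\xi_j)+(1-t)(\xi^{(-)}_j-\tilde{\xi}_j)$ is a convex combination of subharmonic functions, hence subharmonic; the analogous verification applies to $(t\xi_j+(1-t)\tilde{\xi}_j)-\xi^{(+)}_j$. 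Combined with the $L^1$-compactness of $\CC$ from Lemma \ref{compact} and the $L^1$-continuity of $S$ from Lemma \ref{L1}, the Schauder fixed point theorem then produces a fixed point $\xi\in\CC$ with $S(\xi)=\xi$, and Lemma \ref{xiprime} supplies the claimed regularity.

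To verify the boundary condition (\ref{b}), I would use the sandwich $\xi^{(-)}_j\leq\xi_j\leq\xi^{(+)}_j$ for $j=1,\dots,r-1$, which holds because $\xi\in\CC$. By Lemma \ref{important lemma}(\ref{v}) both barriers attain the boundary value $\eta$, so a direct squeeze yields $\lim_{z\to\zeta}\xi_j(z)=\eta_j(\zeta)$ for $j=1,\dots,r-1$ and every $\zeta\in\partial Y$. For the last component, the identity $\xi_r=-(\xi_1+\cdots+\xi_{r-1})$ (which holds because $\xi$ is $V$-valued) together with $\eta_r=-(\eta_1+\cdots+\eta_{r-1})$ then gives the limit at $j=r$.

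The bulk of the analytic work has already been absorbed into the preparatory results: the barrier construction of Lemma \ref{important lemma}, the $L^1$-compactness of Lemma \ref{compact}, and the existence, regularity, and continuity of $S$ from Lemmas \ref{xiprime} and \ref{L1}. Consequently the proof proposed here is essentially a clean assembly, and the only step that demands a small independent check is the convexity of $\CC$, which reduces to the preservation of subharmonicity under convex combinations. I therefore do not anticipate a substantial obstacle beyond verifying carefully that Schauder's hypotheses are met in the $L^1$-topology and that the pointwise barrier inequalities (as opposed to $L^1$-bounds) are compatible with the fixed point produced.
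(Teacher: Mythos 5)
Your proposal is correct and follows essentially the same route as the paper: apply the Schauder fixed point theorem to $S$ on the convex, $L^1$-compact set $\CC$ using Lemmas \ref{compact}, \ref{xiprime}, and \ref{L1}, then deduce the boundary condition by squeezing between $\xi^{(-)}$ and $\xi^{(+)}$. The extra details you supply (convexity of $\CC$ and the limit for the $r$-th component) are exactly the checks the paper leaves implicit.
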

\begin{proof} It is easy to observe that $\CC$ is a convex set. From Lemma \ref{compact}, $\CC$ is also a compact set with respect to the $ L^1$-topology. Then by the Schauder fixed point theorem (cf. \cite[p.143, Theorem 5.28]{Rud1}), the map $S$ defined in Definition \ref{S} has a fixed point $\xi$ since $S$ is a continuous map as shown in Lemma \ref{L1}. From Lemma \ref{xiprime}, the fixed point $\xi$ is a $V$-valued $C^{1,\alpha}$-function. The $V$-valued function $\xi$ solves equation (\ref{HEeq0}) in the sense of the distribution since $\xi$ is a fixed point of the map $S$. It can also be verified that the fixed point $\xi$ satisfies the boundary condition (\ref{b}) since we have $\xi^{(-)}\leq\xi\leq\xi^{(+)}$ and $\lim_{z\to\zeta}\xi^{(-)}(z)=\lim_{z\to\zeta}\xi^{(+)}(z)=\eta(\zeta)$ for all $\zeta\in\partial Y$. 
\end{proof}
The uniqueness of a solution $\xi$ in Theorem \ref{main theorem 1} follows from the maximum principle:
\begin{lemm}\label{uniqueness}{\it Let $\xi=(\xi_1,\dots, \xi_r)$ and $\xi^\prime=(\xi_1^\prime,\dots, \xi_r^\prime)$ be $V$-valued functions that satisfy (\ref{a}) and (\ref{b}) in Theorem \ref{main theorem 1}. Then we have $\xi=\xi^\prime$.
}
\end{lemm}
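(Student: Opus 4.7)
The plan is to reduce the uniqueness statement to an application of Proposition \ref{prop2} combined with the maximum principle and AM-GM. Define $w := \log\bigl(\sum_{j=1}^r e^{\xi_j - \xi^\prime_j}\bigr)$ on $Y$. Since $\xi$ and $\xi^\prime$ are both $C^{1,\alpha}$ weak solutions of (\ref{HEeq0}), the two terms on the right-hand side of inequality (\ref{LSE}) vanish identically, so Proposition \ref{prop2} gives
\begin{align*}
\Delta_{\omega_X} w \leq 0 \quad \text{in the sense of distributions on } Y,
\end{align*}
i.e., $w$ is weakly subharmonic. Because $\xi$ and $\xi^\prime$ are $C^{1,\alpha}$, the function $w$ is moreover continuous on $Y$, so no technicalities involving upper-semicontinuous envelopes arise.

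Next I would control the boundary behavior. Condition (\ref{b}) of Theorem \ref{main theorem 1} gives $\xi(z) - \xi^\prime(z) \to \eta(\zeta) - \eta(\zeta) = 0$ as $z \to \zeta \in \partial Y$, so $w(z) \to \log r$ at every boundary point. Since $\overline{Y} = \{f_X \leq c\}$ is compact, the standard maximum principle for continuous subharmonic functions (cf.\ \cite[Corollary 1.16]{GZ1}) applied to $w$ yields
\begin{align*}
w(z) \leq \log r, \qquad \text{i.e.,} \qquad \sum_{j=1}^r e^{\xi_j(z) - \xi^\prime_j(z)} \leq r \quad \text{for all } z \in Y.
\end{align*}

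To conclude, I would invoke AM-GM: because $\xi$ and $\xi^\prime$ take values in $V = \{x \in \R^r : \sum_j x_j = 0\}$, we have $\sum_{j=1}^r (\xi_j(z) - \xi^\prime_j(z)) = 0$ at every $z \in Y$, hence
\begin{align*}
\frac{1}{r}\sum_{j=1}^r e^{\xi_j(z) - \xi^\prime_j(z)} \geq \prod_{j=1}^r e^{(\xi_j(z) - \xi^\prime_j(z))/r} = 1,
\end{align*}
with equality if and only if $\xi_j(z) - \xi^\prime_j(z)$ is independent of $j$, which under the zero-sum constraint forces $\xi(z) = \xi^\prime(z)$. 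Combined with the opposite inequality from the previous step, equality must hold pointwise on $Y$, whence $\xi = \xi^\prime$. The only real obstacle is verifying that Proposition \ref{prop2} applies even though both solutions have only $C^{1,\alpha}$ regularity, but that proposition was proved precisely for this distributional setting via the mollification argument in Lemma \ref{lemma1}, so the argument goes through cleanly.
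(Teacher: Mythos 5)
Your proposal is correct and follows essentially the same route as the paper: apply Proposition \ref{prop2} to get subharmonicity of $\log\bigl(\sum_{j=1}^r e^{\xi_j-\xi_j^\prime}\bigr)$, use the maximum principle with the boundary value $\log r$ to get the upper bound, and use the zero-sum constraint (your AM--GM step is exactly the paper's observation that the function is also bounded below by $\log r$) to force equality and hence $\xi=\xi^\prime$. Your write-up merely spells out the AM--GM equality case more explicitly than the paper does.
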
 
\begin{proof} It follows from Proposition \ref{prop2} that $\log(\sum_{j=1}^re^{\xi_j-\xi_j^\prime})$ is a subharmonic function. By applying the maximum principle \cite[Corollary 1.16]{GZ1} to the subharmonic function $\log(\sum_{j=1}^re^{\xi_j-\xi_j^\prime})-\log r$, it concludes that $\log(\sum_{j=1}^re^{\xi_j-\xi_j^\prime})-\log r$ is a negative function. Since $\sum_{j=1}^r(\xi_j-\xi_j^\prime)=0$, it can be verified that $\log(\sum_{j=1}^re^{\xi_j-\xi_j^\prime})-\log r$ is also a positive function. Therefore $\xi_j=\xi_j^\prime$ for all $j=1,\dots, r$, and thus we have the result.
\end{proof}
From Lemma \ref{existence} and Lemma \ref{uniqueness}, we have Theorem \ref{main theorem 1}.

\medskip
\noindent
{\bf Acknowledgements.} I am grateful to Takahiro Aoi and Ryushi Goto for interesting discussions and stimulating conversations. I am also grateful to Tatsuya Tate for interesting discussions and stimulating conversations, and for helpful comments on the manuscript. I wish to express my gratitude to Hisashi Kasuya and Takashi Ono for their discussions on Simpson's papers. I am indebted to Toshiaki Yachimura for valuable discussions and for providing invaluable information on the regularity theory of elliptic PDEs. It was through his assistance that I was able to prove Lemma \ref{xiprime}. I would like to thank Yoshinori Hashimoto for interesting discussions, many stimulating conversations, and for answering my many fundamental questions about the Monge–Amp\`ere equation, potential theory, and related topics. I wish to thank Takuro Mochizuki for his guidance on cyclic Higgs bundles and harmonic bundles, as well as for answering my many questions about them when I was beginning my research on cyclic Higgs bundles.

\noindent
E-mail address 1: natsuo.miyatake.e8@tohoku.ac.jp

\noindent
E-mail address 2: natsuo.m.math@gmail.com \\

\noindent
Mathematical Science Center for Co-creative Society, Tohoku University, 468-1 Aramaki Azaaoba, Aoba-ku, Sendai 980-0845, Japan.
\end{document}